\documentclass{amsart}

\input xy
\xyoption{all}

\newbox\noforkbox \newdimen\forklinewidth
\setbox0\hbox{$\textstyle\smile$}
\setbox1\hbox to \wd0{\hfil\vrule width \forklinewidth depth-2pt
 height 10pt \hfil}
\setbox\noforkbox\hbox{\lower 2pt\box1\lower 2pt\box0\relax}
\def\unionstick{\mathop{\copy\noforkbox}\limits}
\def\nonfork_#1{\unionstick_{\textstyle #1}}

\setbox0\hbox{$\textstyle\smile$}
\setbox1\hbox to \wd0{\hfil{\sl /\/}\hfil}
\setbox2\hbox to \wd0{\hfil\vrule height 10pt depth -2pt width
              \forklinewidth\hfil}
\newbox\doesforkbox
\setbox\doesforkbox\hbox{\lower 2pt\box1 \lower 2pt\box2\lower2pt\box0\relax}
\def\nunionstick{\mathop{\copy\doesforkbox}\limits}

\def\fork_#1{\nunionstick_{\textstyle #1}}

\newbox\noforkboxs \newdimen\forklinewidths
\setbox0\hbox{$\textstyle\smile$}
\setbox1\hbox to \wd0{\hfil\vrule width \forklinewidths depth-2pt
 height 10pt \hfil}
\setbox\noforkboxs\hbox{\lower 2pt\box1\lower 2pt\box0\relax}
\def\unionsticks{\mathop{\copy\noforkboxs^{\mkern-8mu\mathrm{s}}}\limits}
\def\nonforks_#1{\unionsticks_{\textstyle #1}}

\setbox0\hbox{$\textstyle\smile$}
\setbox1\hbox to \wd0{\hfil{\sl /\/}\hfil}
\setbox2\hbox to \wd0{\hfil\vrule height 10pt depth -2pt width
              \forklinewidth\hfil}
\newbox\doesforkboxs
\setbox\doesforkboxs\hbox{\lower 2pt\box1 \lower 2pt\box2\lower2pt\box0\relax}
\def\nunionsticks{\mathop{\copy\doesforkbox}\limits}

\def\forks_#1{\nunionsticks_{\textstyle #1}}

\usepackage{amsmath}
\usepackage{amssymb}

\usepackage{amsthm}  

\newtheorem{same}{This should never appear}[section]
\newtheorem{defin}[same]{Definition}

\newtheorem{remark}[same]{Remark}
\newtheorem{theorem}[same]{Theorem}

\newtheorem{lemma}[same]{Lemma}
\newtheorem{fact}[same]{Fact}
\newtheorem{question}[same]{Question}

\newtheorem{prop}[same]{Proposition}

\newtheorem*{theorem-2}{Theorem \ref{sta-2}}
\newtheorem*{theorem-3}{Theorem \ref{equiv}}
\newtheorem*{cor-1}{Corollary \ref{abs}}
\newtheorem*{theorem-4}{Theorem \ref{inj-equiv}}
\newcommand{\Cl}{\operatorname{cl}}

\newtheorem{defin*}{Definition}
\newtheorem*{theorem*}{Theorem}

\makeatletter
\newcommand{\skipitems}[1]{%
  \addtocounter{\@enumctr}{#1}%
}

\newcommand{\rest}{\mathord{\upharpoonright}}
\newcommand{\id}{\operatorname{id}}

\newcommand{\Kk}{\mathbf{K}}

\newcommand{\Kh}{\hat{\mathbf{K}}}

\newcommand{\restr}{\upharpoonright}

\newcommand{\LS}{\operatorname{LS}}

\newcommand{\Z}{\mathbb{Z}}
\newcommand{\mcU}{ U}
\newcommand{\mcV}{ V}
\newcommand{\leap}[1]{\le_{#1}}

\newcommand{\lea}{\leap{\Kk}}

\newcommand{\gtp}{\mathbf{gtp}}
\newcommand{\gS}{\mathbf{gS}}

\usepackage{tikz-cd}
\newcommand{\Q}{\mathbb{Q}}

\title[An unstable AEC of modules: A variation of Paolini-Shelah's example]{An unstable abstract elementary class of modules: A variation of  Paolini-Shelah's example}

\author[Herden, Mazari-Armida, Walton]{Daniel Herden, Marcos Mazari-Armida, Michael D. Walton}

\thanks{The first author was supported by Simons Foundation grant MPS-TSM-00007788.
	The second author was supported by NSF grant DMS-2348881 and Simons Foundation grant MPS-TSM-00007597}

\address{\newline Daniel Herden, Marcos Mazari-Armida, and Michael D. Walton \newline Department of Mathematics \newline Baylor University \newline Sid Richardson Building \newline 1410 S.~4th Street \newline	Waco, TX 76706, USA}
\urladdr{https://sites.baylor.edu/daniel\_herden/}
\urladdr{https://sites.baylor.edu/marcos\_mazari/}

\email{daniel\_herden@baylor.edu; marcos\_mazari@baylor.edu, michael\_walton2@baylor.edu}

\begin{document}

\begin{abstract} 
We construct a class $\hat{K}$ of torsion-free abelian groups  such that $\Kh =(\hat{K}, \leq_p)$ is an abstract elementary class with $\LS(\Kh)=\aleph_0$ such that 
\begin{itemize}
\item $\Kh$ is not stable.
\item $\Kh$ has the joint embedding property and no maximal models, but does not have the amalgamation property.
\item $\Kh$ is $(<\aleph_0)$-tame. 
\end{itemize} 
The  class we construct is a variation of \cite[Section 4]{ps} which isolates the core mechanism of the Paolini-Shelah construction.
\end{abstract}


\maketitle

{\let\thefootnote\relax\footnote{{AMS 2020 Subject Classification:
Primary:  03C60, 03C48. Secondary: 03C45, 20K20,  	20K40.

Key words and phrases. Abstract elementary classes; Stability; Torsion-free abelian groups; Tameness.  }}}


\section{Introduction}
Fisher \cite{fisher} and Baur \cite[Theorem 1]{baur} showed in the seventies  that if $T$ is a complete first-order theory of $R$-modules then $T$ is stable. A natural research direction is to determine if the previous result can be extended beyond first-order model theory. Recently, the focus has been on the following question for abstract elementary classes: 
 
\begin{question}[{\cite[Question 2.12]{maztor}}]\label{mainq} Let $R$ be an associative unital ring and denote the pure submodule relation by $\leq_p$.
If $(K, \leq_p)$ is an abstract elementary class such that $K$ is a class of $R$-modules, is $(K, \leq_p)$ stable? Is this true if $R =\mathbb{Z}$?
\end{question} 

For many years the evidence pointed to a \emph{positive} answer as for example it was shown to be positive for the following classes of modules: torsion-free abelian groups \cite{baldwine}, $R$-modules  \cite{kuma}, torsion abelian groups \cite{maztor}, $\aleph_1$-free abelian groups \cite{maztor}, $R$-flat modules \cite{lrvcell}, $R$-absolutely pure modules \cite{maz2}, and $R$-flat modules of dimension $\leq n$ \cite{mt1}.\footnote{See  \cite{bon25} and \cite[Section 2]{maztor} for more on the positive  direction.} Surprisingly, in December 2025 Paolini and Shelah \cite{ps} constructed a class of torsion-free abelian groups which shows that the answer to Question \ref{mainq} is \emph{negative} even when $R= \mathbb{Z}$.

 The purpose of this paper is to study a variation of the example of Paolini and Shelah \cite[Section 4]{ps}. This is pursued in order to better understand the core mechanism of the Paolini-Shelah construction and to try to use this knowledge to isolate general conditions that imply stability for AECs of modules. 
 
 More precisely, we construct  a class $\hat{K}$ of torsion-free abelian groups (see Definition \ref{class}) such that the following holds:

\begin{theorem}\label{maint}
 $\Kh=(\hat{K}, \leq_p)$ is an abstract elementary class with $\LS(\Kh)=\aleph_0$ such that:
    \begin{enumerate}
    \item  $\Kh$ is not stable. 
        \item  $\Kh$ has the joint embedding property and no maximal models.
        \item  $\Kh$ does not have the amalgamation property.
         \item  $\Kh$ is $(<\aleph_0)$-tame.

    \end{enumerate}
\end{theorem}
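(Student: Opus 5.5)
The plan is to verify the four clauses of Theorem \ref{maint} one at a time, after first checking the AEC axioms, working throughout from the explicit description of $\hat{K}$ in Definition \ref{class}. The class is a variation of \cite[Section 4]{ps}, so I expect membership in $\hat{K}$ to be a condition on torsion-free abelian groups with two features. First, it should be witnessed by finite data: a forbidden configuration among finitely many elements, together with divisibility conditions by a fixed family of primes. Second, it should be preserved by pure subgroups and by directed unions.

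\textbf{AEC axioms.} Closure under isomorphism and the coherence axiom are immediate for $\leq_p$. The Tarski--Vaught chain axioms hold because purity is preserved under unions of $\leq_p$-chains, and because the defining condition of $\hat{K}$ is finitary: a failure in the union would already appear in some member of the chain. For $\LS(\Kh)=\aleph_0$, I will start from a countable subset $A\subseteq G\in\hat{K}$. I then close it countably many times under two operations: adding solutions of the equations $ny=a$ that have solutions in $G$ (this gives purity), and adding any witnesses demanded by the definition of $\hat{K}$. The union is a countable pure subgroup, and it lies in $\hat{K}$ provided the class is closed under pure subgroups. That closure is the first thing I will check from Definition \ref{class}.

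\textbf{JEP and no maximal models.} For JEP, given $G_1,G_2\in\hat{K}$, I will test whether $G_1\oplus G_2\in\hat{K}$. Each $G_i$ is pure in the direct sum, so this suffices. For no maximal models, I will use $G\leq_p G\oplus\mathbb{Z}$ with the extension proper. Both steps reduce to showing that a forbidden configuration in a direct sum must already live in one summand, which I expect to follow by projecting onto the coordinates.

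\textbf{Failure of AP and instability.} For non-amalgamation I will choose a countable $M$ and two extensions $M\leq_p N_1$ and $M\leq_p N_2$. Each $N_i$ adjoins a divisibility witness, for example making some $m\in M$ divisible by $p^\infty$ along different branches. The witnesses are chosen so that, in any common extension $N\in\hat{K}$, the images of the two witnesses produce exactly the configuration excluded by Definition \ref{class}; this yields a contradiction. Instability is, I expect, the main obstacle. The plan is to find a countable $M\in\hat{K}$ with $2^{\aleph_0}$ Galois types over $M$. For each $\eta\in 2^{\omega}$ I will build $N_\eta=\langle M, x_\eta\rangle_*$, the purification of $M+\mathbb{Z}x_\eta$ inside $M\otimes\mathbb{Q}\oplus\mathbb{Q}x_\eta$ with prescribed divisibilities of $x_\eta-m_n$ encoded by $\eta(n)$. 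Then $\gtp(x_\eta/M;N_\eta)\neq\gtp(x_\nu/M;N_\nu)$ for $\eta\neq\nu$, because the two types disagree on a divisibility statement preserved by $\leq_p$-embeddings. Membership $N_\eta\in\hat{K}$ must be checked directly. The delicate point is that, without AP, Galois types are given by the transitive closure of atomic equivalence, so I must rule out intermediate amalgams; using invariants preserved by pure embeddings, namely $p$-heights, avoids this.

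\textbf{Tameness.} Suppose $\gtp(a/M;N_1)\neq\gtp(b/M;N_2)$. I will show that the pure-embedding invariants distinguishing them are the isomorphism type over $M$ of the purified subgroups $\langle M,a\rangle_*$ and $\langle M,b\rangle_*$. Any difference between these is already witnessed by a single equation $na=\sum k_im_i$ involving finitely many $m_i\in M$, so restricting to those finitely many $m_i$ separates the types and gives $(<\aleph_0)$-tameness. The converse direction, that equal invariants yield equal Galois types, would use an explicit amalgam of $\langle M,a\rangle_*$ and $\langle M,b\rangle_*$ over $M$, which still has to be checked to lie in $\hat{K}$.
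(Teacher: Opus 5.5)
Several of your steps rest on properties of $\hat{K}$ that Definition \ref{class} does not have. Membership is a disjunction ($\hat{K}=\hat{K}_1\cup\hat{K}_2$), and Condition (2)(b) is an existence requirement rather than a forbidden configuration.

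Consequently $\hat{K}$ is not closed under pure subgroups. Only $\hat{K}_1$-membership and Condition (2)(a) pass down. For example, in $G_\mcU$ with $\mcU=\emptyset$ the pure closure of $\{x_0,z_1\}$ is $\Z[1/p_1]x_0\oplus\Z[1/p_3]z_1\notin\hat{K}$.

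$\hat{K}$ is also not closed under direct sums. By Proposition \ref{direct sums}(2), $\Z[1/p_1]\oplus\Z[1/p_3]\notin\hat{K}$, so your JEP step fails. For $0\neq G\in\hat{K}_1$ we also have $G\oplus\Z\notin\hat{K}$: for $g\neq 0$, the element $(g,0)$ lies in $p_1^\omega(G\oplus\Z)$, while $p_3^\omega(G\oplus\Z)=p_4^\omega(G\oplus\Z)=0$. So your no-maximal-models step fails as well.

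The missing ingredient is the Claim in Lemma \ref{lemma:JEP-NMM}: every $G\in\hat{K}$ is pure in some $H\in\hat{K}_2$. To build $H$, take a maximal independent set $\{g_i : i\in I\}\subseteq G$ and adjoin new elements $z_i$ together with all $p_3^{-n}z_i$ and $p_4^{-n}(g_i+z_i)$. Only after this step does closure of $\hat{K}_2$ under direct sums give JEP and proper extensions.

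The remaining structural steps need similar repair:
\begin{itemize}
\item Your L\"owenheim--Skolem closure must add the (2)(b)-witnesses $z_g$ explicitly, and must split according to whether $A$ lies in a pure $\hat{K}_1$-subgroup (Proposition \ref{closure}).
\item The chain axiom needs Proposition \ref{basic}(3) to handle chains that pass from $\hat{K}_1$ into $\hat{K}_2$.
\item Your sample AP configuration cannot occur, since $p^\omega N\cap M=p^\omega M$ whenever $M\leq_p N$. The actual obstruction is (2)(a) applied to new elements, as described next.
\end{itemize}

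The decisive gaps are in (1) and (4).

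For (1), non-stability means failure of stability in every $\lambda\geq\aleph_0$. So $2^{\aleph_0}$ types over a countable $M$ only refutes $\aleph_0$-stability. Divisibility invariants of $x$ over $M$ can never do better. By purity, each set $\{m\in M : n\mid kx-m\}$ is empty or a coset of $nM$. Hence such data separate at most $\|M\|^{\aleph_0}$ types, which equals $\|M\|$ whenever $\|M\|^{\aleph_0}=\|M\|$. Indeed, Galois types of $\Kh$ are not pp-syntactic.

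The missing idea is the closure mechanism:
\begin{itemize}
\item $\Cl_{\Kh}^{G_\mcU}(\{z_0\}\cup G_\lambda)$ lies in $\hat{K}_2$, so it contains a (2)(b)-witness $z'$ for each $x_\alpha$.
\item Condition (2)(a) in $G_\mcU$ forces $z'=mz_\alpha$, so every $z_\alpha$ lies in the closure.
\item Condition (2)(a) in $G_\mcV$ then forces any isomorphism of the closures that is the identity on $G_\lambda$ to fix every $z_\alpha$.
\item Therefore $\mcU=\{\beta : p_5^\infty\mid z_\beta\}$ is an invariant of $\gtp_{\Kh}(z_0/G_\lambda;G_\mcU)$. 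This gives $2^\lambda$ types over $G_\lambda$ for every infinite $\lambda$.
\end{itemize}
Because $\Kh$ admits intersections, Galois types are just isomorphism types of closures. This also settles your worry about chains of atomic equivalences.

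For (4), your invariant is the wrong one. The pure closure of $G_\lambda\cup\{z_0\}$ in $G_\mcU$ is $\bigoplus_{\alpha\neq 0}\Z[1/p_1]x_\alpha\oplus\bigl(G_\mcU\cap(\Q x_0\oplus\Q z_0)\bigr)$, which depends only on whether $0\in\mcU$. So $\mcU=\{1\}$ and $\mcV=\emptyset$ give literally the same pure closure, yet by Theorem \ref{Prop:K_not_stable} they give distinct Galois types. Thus the converse direction you postpone is false.

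The right object is the $\Kh$-closure, which contains the witnesses. The paper shows $\Kh$ is pseudo-universal (Theorem \ref{Thm:tame}): an isomorphism between closures is determined by the image of the generators. This uses torsion-freeness at the purification steps and (2)(a) at the witness steps. It then obtains $(<\aleph_0)$-tameness from \cite[Theorem 3.7]{vaseyu}.
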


 The reason stability fails in our example is essentially the same reason as that of \cite[Section 4]{ps}, but the abstract elementary class we construct around the abelian groups that witness unstability is described by fewer conditions which are moreover easier to check than those of \cite[Section 4]{ps}. More importantly,  our abstract elementary class is likely better behaved than that of \cite{ps}. To be precise, it is unclear if the example of Paolini and Shelah  satisfies Conditions (2) and (4) of  Theorem \ref{maint}.

 Of particular interest to us is that $\Kh$ is $(<\aleph_0)$-tame as one could have hoped that the answer to Question \ref{mainq} would be  affirmative under this strong assumption.  We show that $\Kh$ is tame by showing that it admits intersections and that closures are canonically built (see Proposition \ref{closure} and Theorem \ref{Thm:tame}).

Beyond answering Question \ref{mainq},  we think that the existence of AECs of modules that are not stable provides additional evidence that abstract elementary classes are noticeably more complicated objects than their first-order counterparts. 

The paper is divided into three sections. Section 2 presents necessary
background on  abelian group theory and abstract elementary classes required to read the paper. Section 3 has the main construction and results of the paper. Readers interested solely in unstability essentially only need to look at Definition \ref{class}, Proposition \ref{closure}, Definition \ref{groups}, and Theorem \ref{Prop:K_not_stable}. 

\section{Preliminaries}

We briefly introduce the notions of abelian group theory and abstract elementary classes that will be used in this paper. Further details on abelian group theory can be consulted in \cite{fuc} and on abstract elementary classes in \cite{baldwinbook09}.

\subsection*{Abelian groups} All groups discussed in this paper are abelian groups. Given a group $G$ and $n \in \mathbb{Z}$, we let $nG = \{ng : g\in G\}$.

An abelian group $G$ is \emph{torsion-free} if every $0\ne g \in G$ has infinite order.  We say that $H$ is a \emph{pure subgroup} of $G$, denoted by $H \leq_p G$, if $H$ is a subgroup of $G$ and for every $n\in \Z$, $nH = H\cap nG$. If $G, H$ are torsion-free groups, to check purity it is enough to show that $pH = H\cap pG$ for every prime number $p$.

Given a group $G$ and $p$ a prime number, let $p^\omega G := \bigcap_{n\in \Z_{\geq 0}} p^n G = \{ g \in G : p^n \mid g \text{ for all } n < \omega \}$. We will sometimes write $p^\infty \mid g$ to indicate that $g \in p^\omega G$. We will use the next fact often without reference.

\begin{fact}  Let $G, H$ be torsion-free abelian groups and $H \leq_p G$. 
\begin{enumerate}
\item If $ng \in H$ for $g\in G$ and $n \in \Z_{>0}$, then $g \in H$.
\item  For every prime number $p$, $p^\omega H=p^\omega G \cap H$. In particular, if $p^\omega G = 0$ then $p^\omega H = 0$.
\end{enumerate}
\end{fact}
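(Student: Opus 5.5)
Both parts follow directly from the definition of purity and torsion-freeness.

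For (1), suppose $g \in G$, $n \in \Z_{>0}$ and $h := ng \in H$. Then $h \in H \cap nG$. By purity, $h \in nH$, so there is some $h' \in H$ with $nh' = h = ng$. This gives $n(g - h') = 0$ in $G$. Since $G$ is torsion-free and $n \neq 0$, we get $g = h'$, and hence $g \in H$.

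For (2), fix a prime $p$. The inclusion $p^\omega H \subseteq p^\omega G \cap H$ is immediate: for every $n$ we have $p^n H \subseteq p^n G$, and intersecting over all $n$ gives the claim. For the reverse inclusion, let $h \in p^\omega G \cap H$. For each $n$, the element $h$ lies in $H \cap p^n G$. Purity applied with the integer $p^n$ gives $H \cap p^n G = p^n H$, so $h \in p^n H$. Since this holds for every $n$, we conclude $h \in p^\omega H$.

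The "in particular" clause follows at once: if $p^\omega G = 0$, then $p^\omega H = 0 \cap H = 0$.

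There is no real obstacle in either part. The only point requiring care is where torsion-freeness enters. In (1) it is needed to cancel $n$ and conclude $g = h'$. In (2) it is not needed at all, because purity is assumed for every integer $n$, not only for primes.
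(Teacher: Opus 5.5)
Your proof is correct. The paper states this as a Fact without proof, and your argument is the standard verification from the definition of purity; the same cancellation step appears inline in the paper, e.g.\ in the case $n=3m+2$ of the proof of Proposition~\ref{closure}. Your observation that torsion-freeness is needed only in (1) is also accurate, since $\leq_p$ is defined using all integers $n$ rather than just primes.
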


 Finally, given prime numbers $p, q, r$,  let $\Z[1/p, 1/q, 1/r]$ denote the subring of $\Q$ that results from adjoining the rational number $1/p, 1/q, 1/r$ to $\Z$, i.e.,  \[\Z[1/p, 1/q, 1/r] = \{ p^{-n}a + q^{-m}b + r^{-s}c  : n, m,s \in \Z_{\ge 0}, a, b, c \in \Z\}.\]

 \subsection*{Abstract elementary classes} Abstract elementary classes (or AECs) were introduced by Shelah in the seventies \cite{sh88}. An \emph{abstract elementary class} is a pair $\Kk = (K, \lea)$ where $K$ is a class of $L$-structures (for some fixed finitary language $L = L(\Kk)$)\footnote{In this paper, $L$ will be the language of abelian groups, i.e., $L= \{ +, -, 0\}$  where $+,-$ are binary functions.} and $\lea$ is a partial order on $K$ extending the substructure relation\footnote{In this paper, $\lea$ will be the pure subgroup relation $\leq_p$.} such that $\Kk$ is closed under isomorphisms and increasing continuous $\lea$-chains; and satisfies coherence and  a version of the downward L\"owenheim-Skolem theorem. See \cite[Definition 4.1]{baldwinbook09} for the full definition.
 
 Given $M \in K$, we will write $|M|$ for the underlying set of the model and $\|M\|$ for its cardinality. We say that $f: M \to N$ is a \emph{$\Kk$-embedding} if $f: M \cong f[M] \lea N$. In this paper all $\Kk$-embeddings will be pure embeddings. 
 
 We will investigate the following three properties in this paper.
 
 \begin{defin} Let $\Kk=(K, \lea)$ be an AEC.
  \begin{enumerate}
        \item $\Kk$ has \emph{no maximal models} if for every $M\in K$, there exists an $N\in K$ with $M \lea N$ and $|M| \subsetneq |N|$.

        \item $\Kk$ has the \emph{joint embedding property} if for every  $M_1, M_2 \in K$, there exist $N \in K$ and $\Kk$-embeddings $f_1: M_1 \to N$ and $f_2: M_2 \to N$. 

        \item $\Kk$ has the \emph{amalgamation property} if every span of $\Kk$-embeddings $N_1 \leftarrow M \to N_2$ can be completed to a commutative square of $\Kk$-embeddings.
 \end{enumerate}
 \end{defin}

The main example of this paper has the additional property of admitting intersections. This is a strong property that often fails even for elementary classes. These AECs were introduced in \cite{bash} and further studied in \cite[Section 2]{vaseyu}.

\begin{defin} An AEC $\Kk$ \emph{admits intersections} if for every $N \in K$ and $A \subseteq |N|$,  $\Cl^{N}_{\Kk}(A):= \bigcap\{M :  A \subseteq M \lea N \text{and } M \in K\} \in K$ and $\Cl^{N}_{\Kk}(A) \lea N$. 
\end{defin}

Galois types are the correct generalization of first-order types to AECs and were first introduced by Shelah in \cite{sh300}. We introduce Galois types for AECs that admit intersections. 

\begin{defin} Let $\Kk$ be an AEC that admits intersections and $\Kk^3$ be the set of triples of the form $(b, A, N)$, where $N \in K$, $A \subseteq |N|$, and $b \in N$.

\begin{enumerate}
    \item For $(b_1, A_1, N_1), (b_2, A_2, N_2) \in \Kk^3$, we say that $(b_1, A_1, N_1)E (b_2, A_2, N_2)$ if $A := A_1 = A_2$, and there exists $f : \Cl_{\Kk}^{N_1}(\{ b_1\} \cup A) \cong \Cl_{\Kk}^{N_2}(\{ b_2\} \cup A)$ such that $f \rest A =\id_A$ and $f_1 (b_1)=b_2$. It is easy to check that $E$ is an equivalence relation.
    \item For $(b, A, N) \in \Kk^3$, the \emph{Galois type} of $b$ over $A$ in $N$, denoted by $\gtp_{\Kk} (b / A; N)$, is the $E$-equivalence class of $(b, A, N)$.

  \end{enumerate}
\end{defin}

\begin{remark}
The definition above is equivalent to the standard definition of Galois type by \cite[Proposition 2.18]{vaseyu}.
\end{remark}

Since types are semantic objects, they might not be determined by their restriction to finite subsets. In case that they are determined by their restrictions to finite subsets we say that $\Kk$ is \emph{$(<\aleph_0)$-tame}. More precisely, $\Kk$ is \emph{$(<\aleph_0)$-tame}  if for every $M \in K$ and $\gtp_{\Kk} (b_1 / M; N_1) \ne \gtp_{\Kk} (b_2 / M; N_2)$, there is $A \subseteq |M|$ such that $|A|< \aleph_0$ and $\gtp_{\Kk} (b_1 / A; N_1) \ne \gtp_{\Kk} (b_2 / A; N_2)$.

A fundamental topic of study in model theory, and in the theory of AECs in particular, are dividing lines \cite{sh-div}. In this paper, we will study the dividing line of stability. 

\begin{defin}
$\Kk$ is \emph{stable in $\lambda$} if  for any $M \in K$ with $\|M\|=\lambda$ it holds that $| \gS_{\Kk}(M) | \leq \lambda$, where $\gS_{\Kk}(M) = \{ \gtp_{\Kk}(a/M; N) : M \lea N \text{ and } a \in N\}$.

We say that $\Kk$ is \emph{stable} if there exists a  cardinal $\lambda \geq \LS(\Kk)$ for which $\Kk$ is stable in $\lambda$.
\end{defin}

 These are all the notions of AECs used in this paper. Detailed introductions to abstract elementary classes from an algebraic perspective are given in \cite{bon25} and \cite[Section 2]{maztor}.

\section{Main Results}

 We introduce the main object of study of this paper.

\begin{defin}\label{class}
    Let $\bar{p} = (p_1, p_3, p_4)$ be distinct primes.\footnote{The reason we do not pick a prime $p_2$ is so our example can be easily compared to \cite{ps}.}

    Let $\hat{K} = \hat{K}(\bar{p})$ be the class of all torsion-free abelian groups $G$ such that:
    \begin{enumerate}
        \item $G = p_1^\omega G$ and $p^\omega G = 0$ for all primes $p\neq p_1$, \textbf{or}
        \item 
        \begin{enumerate}
            \item for all $g\in p_1^\omega G$, there exists \emph{at most} one $z\in p_3^\omega G$ such that $g+z\in p_4^\omega G$, and
            \item for all $g\in p_1^\omega G$, there are $k\in \Z_{>0}$ and $z\in p_3^\omega G$ with $kg + z\in p_4^\omega G$.
        \end{enumerate}
    \end{enumerate}

For $\ell \in \{1,2\}$, if a torsion-free abelian group $G$ satisfies Condition $(\ell)$ we will say that $G \in \hat{K}_\ell$. Let $\Kh=(\hat{K}, \leq_p)$ and $\Kh_\ell=(\hat{K}_\ell, \leq_p)$.
\end{defin}

\begin{remark}
    As mentioned in the introduction, the class above is a variation of \cite[Section 4]{ps}. The classes are not formally comparable, but  Condition (1) of Definition \ref{class} loosely corresponds to Condition ($\star_1$)(c) of \cite[Section 4]{ps} and Conditions (2)(a) and (2)(b) of Definition \ref{class} loosely correspond to
    Conditions ($\star_1$)(d)$(\cdot_5)$ and ($\star_1$)(d)$(\cdot_6)$ of \cite[Section 4]{ps}, respectively.\footnote{Note however, as a marked difference, that for $H_1:= \{g\in p_1^\omega G: \mbox{there exists some } z\in p_3^\omega G$ such that $g+z\in p_4^\omega G\}$ Condition ($\star_1$)(d)$(\cdot_6)$ requires $p_1^\omega G/H_1$ to be torsion-free, while for our class $p_1^\omega G/H_1$ is torsion by Condition (2)(b).}\end{remark}

We begin with some basic observations.

\begin{prop}\label{basic}\
\begin{enumerate}
\item $\hat{K}_1 \cap  \hat{K}_2 = \{ 0\}$.
\item If $G \in \hat{K}_1$ and $H \leq_p G$, then $H \in \hat{K}_1$.
\item If $G \in \hat{K}_2$ with $G\ne 0$, $H \in \hat{K}$, and $G \leq_p H$, then $H \in \hat{K}_2$.
\end{enumerate}
\end{prop}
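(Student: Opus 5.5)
Each part follows directly from Definition \ref{class} and the Fact on pure subgroups, so we treat them in order.

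For (1), take $G \in \hat{K}_1 \cap \hat{K}_2$. Since $G \in \hat{K}_1$, we have $p_1^\omega G = G$ and $p_3^\omega G = p_4^\omega G = 0$. Let $g \in G = p_1^\omega G$. Condition (2)(b) gives $k \in \Z_{>0}$ and $z \in p_3^\omega G = 0$ with $kg + z \in p_4^\omega G = 0$. Hence $kg = 0$, and torsion-freeness forces $g = 0$, so $G = 0$. Conversely, $0$ satisfies both conditions trivially, so the intersection is exactly $\{0\}$.

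For (2), let $H \leq_p G \in \hat{K}_1$. Then $H$ is torsion-free. By the Fact, $p^\omega H = p^\omega G \cap H$ for every prime $p$. For $p = p_1$ this gives $p_1^\omega H = G \cap H = H$. For $p \neq p_1$ it gives $p^\omega H = 0 \cap H = 0$. Hence $H \in \hat{K}_1$.

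For (3), let $0 \neq G \in \hat{K}_2$ and $G \leq_p H \in \hat{K}$. Since $H \in \hat{K}$, it suffices to rule out $H \in \hat{K}_1$; note this already handles the case $H \in \hat{K}_1 \cap \hat{K}_2 = \{0\}$, which is impossible since $H \supseteq G \neq 0$. Suppose $H \in \hat{K}_1$. By (2), $G \in \hat{K}_1$. Then $G \in \hat{K}_1 \cap \hat{K}_2 = \{0\}$ by (1), contradicting $G \neq 0$. Hence $H \in \hat{K}_2$.

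No step is a real obstacle. The only point needing care is (1), where the key observation is that in a $\hat{K}_1$-group both $p_3^\omega$ and $p_4^\omega$ vanish, so condition (2)(b) collapses to $kg = 0$.
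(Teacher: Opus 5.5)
Your proposal is correct and matches the paper's proof essentially step for step: (1) and (2) are argued identically, and in (3) your route through (2) and then (1) is the contrapositive form of the paper's observation that, by purity, the failure of the $\hat{K}_1$ conditions for $G$ transfers to $H$. You also explicitly check that $0$ lies in both classes, which the paper leaves implicit.
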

\begin{proof} \ 
\begin{enumerate}
    \item Let $G\in \hat{K}_1 \cap \hat{K}_2$. As $G\in \hat{K}_1$, $G = p_1^\omega G$ and $p_3^\omega G = p_4^\omega G = 0$. Since $G\in \hat{K}_2$, for all $g\in G$, there exists $k\in \Z_{>0}$ and $z=0 \in p_3^\omega G$ such that $kg+0 = 0 \in p_4^\omega G$. But since $G$ is torsion-free and $kg=0$ with $k\neq 0$, it follows that $g=0$, hence $G=0$.
    \item Observe that $p_1^\omega H = H\cap p_1^\omega G = H\cap G=H$ and $p^\omega H = H\cap p^\omega G = H\cap 0=0$ for $p \neq p_1$. Hence $H\in \hat{K}_1$.
    \item Since $G \not\in \hat{K}_1$, either  $G \neq p_1^\omega G$ or $p^\omega G \neq 0$ for some prime $p\neq p_1$. Since $G \leq_p H$, the failure of any of these properties transfers to $H$. Hence $H \not\in \hat{K}_1$, so $H \in \hat{K}_2$.\qedhere
\end{enumerate}

\end{proof}

We next show that $\Kh$ is indeed an AEC.

\begin{lemma}\label{kh-aec}
$\Kh=(\hat{K}(\bar{p}), \leq_p)$ is an AEC with $\LS(\Kh)=\aleph_0$.
\end{lemma}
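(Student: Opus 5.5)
We check the AEC axioms one at a time. The relation $\leq_p$ is a partial order that extends substructure, and it satisfies coherence: if $G_0 \subseteq G_1 \leq_p G_2$ and $G_0 \leq_p G_2$, then $G_0 \leq_p G_1$, because $nG_0 = G_0 \cap nG_2 \supseteq G_0\cap nG_1 \supseteq nG_0$. Closure under isomorphisms is clear, since every condition in Definition \ref{class} is isomorphism invariant. The two substantive axioms are closure under unions of $\leq_p$-chains (the Tarski--Vaught axioms) and the downward L\"owenheim--Skolem property with $\LS(\Kh)=\aleph_0$. Throughout we use the Fact that, for torsion-free groups, $H\leq_p G$ gives $p^\omega H = p^\omega G\cap H$.

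\emph{Chains.} Let $(G_i)_{i<\delta}$ be a $\leq_p$-increasing chain in $\hat{K}$ and put $G=\bigcup_i G_i$. Then $G$ is torsion-free and $G_i\leq_p G$. We first claim that $p^\omega G = \bigcup_i p^\omega G_i$. If $g\in G_i$ and $p^n \mid g$ in $G$ for every $n$, then purity of $G_i$ in $G$ gives $p^n\mid g$ in $G_i$ for every $n$. Hence $p^\omega G\cap G_i = p^\omega G_i$.

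By Proposition \ref{basic}(3), either every $G_i$ lies in $\hat{K}_1$, or from some index on every nonzero $G_i$ lies in $\hat{K}_2$.

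In the first case, $G=p_1^\omega G$ and $p^\omega G=0$ for $p\ne p_1$, both by the claim, so $G\in\hat{K}_1$.

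In the second case we check (2)(a) and (2)(b) for $G$. Given $g\in p_1^\omega G$, choose $i$ with $g\in G_i$ and $G_i\in\hat K_2$. Then $g\in p_1^\omega G_i$, so condition (2)(b) in $G_i$ gives $k$ and $z$, and these witnesses still work in $G$. For (2)(a), suppose $z,z'\in p_3^\omega G$ both satisfy $g+z,\ g+z'\in p_4^\omega G$. Choose $j$ with $g,z,z'\in G_j$. Purity gives $z,z'\in p_3^\omega G_j$ and $g\in p_1^\omega G_j$, and also $g+z,\ g+z'\in p_4^\omega G_j$. Uniqueness in $G_j$ then gives $z=z'$.

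The remaining chain axioms, namely that each $G_i\leq_p G$ and that $G\leq_p H$ whenever every $G_i\leq_p H$, are standard facts about purity.

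\emph{L\"owenheim--Skolem.} Let $G\in\hat K$ and $A\subseteq G$. We build a countable-closure $H$ with $A\subseteq H\leq_p G$, $H\in\hat K$, and $|H|\le |A|+\aleph_0$.

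If $G\in\hat{K}_1$, take any pure subgroup of the right size containing $A$; Proposition \ref{basic}(2) puts it in $\hat{K}_1$.

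If $G\in \hat{K}_2$, close $A$ in $\omega$ steps under the following operations:
\begin{itemize}
\item generated subgroups;
\item for each $h$ and $n$ with $n\mid h$ in $G$, adding the (unique, by torsion-freeness) element $h/n$, which makes the union pure;
\item for each $h$ in the current set with $h\in p_1^\omega G$, adding witnesses $k$ and $z\in p_3^\omega G$ for (2)(b), together with $kh+z$.
\end{itemize}
Each step adds only $|A|+\aleph_0$ elements. The union $H$ is then pure in $G$, so $p^\omega H = p^\omega G\cap H$ for every prime $p$. Condition (2)(b) holds in $H$ because the added $z$ lies in $p_3^\omega G\cap H = p_3^\omega H$, and $kh+z\in p_4^\omega H$ for the same reason. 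Condition (2)(a) is inherited from $G$ exactly as in the chain case.

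The main obstacle is pinning down how $p^\omega$ behaves under purity in both directions, since conditions (2)(a) and (2)(b) refer to $p^\omega$ computed inside the group itself. Purity gives $p^\omega H = p^\omega G\cap H$, and this one identity is what makes both the union argument and the closure argument go through. Finally, $\LS(\Kh)=\aleph_0$ exactly, since the class contains infinite groups and nothing finite-sized bounds the L\"owenheim--Skolem number.
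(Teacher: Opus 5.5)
Your proof is correct and follows the paper's route. The chain axiom is handled by the same two-case split (all $G_i\in\hat{K}_1$, or eventually $G_i\in\hat{K}_2$ via Proposition \ref{basic}(3)), and your $\omega$-step closure for L\"owenheim--Skolem is essentially the construction of Proposition \ref{closure}, which the paper cites for $\LS(\Kh)=\aleph_0$; you additionally verify coherence and isomorphism-closure, which the paper takes for granted.
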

\begin{proof} From our discussion of $\Cl_{\Kh}^G(A)$ in Proposition \ref{closure}, $\LS(\Kh)=\aleph_0$ will be clear.  
So, at this point, we only need to check 
 the Tarski-Vaught axioms. In particular, we only need to show that the class is closed under increasing continuous chains. 

Let $\delta$ be a limit ordinal and $\{G_i \in \hat{K} : i < \delta\}$ be an increasing continuous chain. Let $G_\delta:= \bigcup_{i<\delta} G_i$. We show $G_\delta \in \hat{K}$ by considering two cases:
    
    \underline{Case 1}: For every $i < \delta$, $G_i \in \hat{K}_1$. We show that $G_\delta \in \hat{K}_1$. Clearly $p_1^\omega G_\delta \subseteq G_\delta$, so let $g\in G_\delta$. Then there is some $i<\delta$ such that $g\in G_i$. Since $p_1^\omega G_i=G_i$, we have that $g \in p_1^\omega G_i \subseteq p_1^\omega G_\delta$. Thus $p_1^\omega G_\delta=G_\delta$.
    
    Consider $p\neq p_1$, and suppose by way of contradiction that $p^\omega G_\delta \neq 0$. Then there exists  $g\in G_\delta$ such that $G_\delta \models p^n | g$ for every $n$. Since $g\in G_\delta$, there is some $i<\delta$ such that $g\in G_i$. Since $G_i \leq_p G_\delta$, we have that $G_i \models p^n | g$ for every $n$, so $p^\omega G_i \neq 0$, a contradiction as $G_i \in \hat{K}_1$. Hence $G_\delta \in \hat{K}_1$.

    \underline{Case 2}:  There is $i < \delta$ such that $G_i \notin \hat{K}_1$.  We show that $G_\delta \in \hat{K}_2$. Let $i_0 = \operatorname{min}\{ i < \delta : G_i \notin \hat{K}_1 \}$. Observe that for every $i \geq i_0$, $G_i \in \hat{K}_2$ by Proposition~\ref{basic}(3). We check Conditions (2)(a) and (2)(b) of Definition \ref{class}.

    We check Condition (2)(a). Let $g\in p_1^\omega G_\delta$. If $z_1, z_2 \in p_3^\omega G_\delta$ such that $g+z_1, g+z_2 \in p_4^\omega G_\delta$, then there is some $i<\delta$ such that $g,z_1,z_2 \in G_i$ and $i \geq i_0$. Since $G_i \in \hat{K}_2$, there is at most one $z\in p_3^\omega G_i$ such that $g+z\in p_4^\omega G_i$, so $z_1=z_2$. Thus $G_\delta$ satisfies Condition (2)(a).
    
    Along a similar vein, given $g\in p_1^\omega G_\delta$, there exists some $i_0\leq i<\delta$ such that $g\in p_1^\omega G_i$. Since $G_i \in \hat{K}_2$, there are $k\in \Z_{>0}$ and $z\in p_3^\omega G_i$ such that $kg+z \in p_4^\omega G_i$. But then $z\in p_3^\omega G_\delta$ and $k g+z \in p_4^\omega G_\delta$, so $G_\delta$ satisfies Condition (2)(b) and $G_\delta \in \hat{K}_2$. \qedhere

\end{proof}

We next investigate closure under direct sums.

\begin{prop}\label{direct sums}\
\begin{enumerate}
\item $\Kh_1$ and $\Kh_2$ are closed under direct sums.
\item $\Kh$ is not closed under direct sums.
\end{enumerate}
\end{prop}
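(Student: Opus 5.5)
The plan is to reduce everything to the behaviour of $p^\omega$ on direct sums. For torsion-free groups $G_i$ ($i\in I$) and any prime $p$,
\[
p^\omega\Big(\bigoplus_{i\in I} G_i\Big)=\bigoplus_{i\in I} p^\omega G_i .
\]
This holds because $p^n\mid (g_i)_i$ in the sum exactly when $p^n\mid g_i$ in $G_i$ for every coordinate $i$, divisibility being coordinatewise. Direct sums of torsion-free groups are torsion-free, so only Conditions (1) and (2) of Definition \ref{class} need to be checked.

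For (1) and $\hat{K}_1$: if every $G_i=p_1^\omega G_i$ and $p^\omega G_i=0$ for $p\neq p_1$, the displayed identity gives $p_1^\omega G=G$ and $p^\omega G=0$ for $p\neq p_1$, where $G=\bigoplus_{i\in I} G_i$.

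For (1) and $\hat{K}_2$, I would argue coordinatewise.
\begin{itemize}
\item[(2)(a):] Let $g=(g_i)\in p_1^\omega G$ and let $z,z'\in p_3^\omega G$ with $g+z,\,g+z'\in p_4^\omega G$. Then in each coordinate $g_i\in p_1^\omega G_i$, $z_i,z'_i\in p_3^\omega G_i$, and $g_i+z_i,\,g_i+z'_i\in p_4^\omega G_i$. Uniqueness in $G_i$ gives $z_i=z'_i$ for every $i$, so $z=z'$.
\item[(2)(b):] Take $g$ with finite support $F$. For each $i\in F$ choose $k_i\in\Z_{>0}$ and $z_i\in p_3^\omega G_i$ with $k_ig_i+z_i\in p_4^\omega G_i$. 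Put $k=\prod_{i\in F}k_i$ and set the $i$th coordinate of $z$ to $(k/k_i)z_i$ for $i\in F$ and to $0$ otherwise. Since $p^\omega$ of a group is a subgroup, $z\in p_3^\omega G$ and $kg+z\in p_4^\omega G$.
\end{itemize}

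For (2), the natural attempt is to sum a nonzero member of $\hat{K}_1$ with a nonzero member of $\hat{K}_2$ and show the result lies in neither class.
\begin{itemize}
\item Take $A=\Z_{(p_1)}$, the rationals with denominators powers of $p_1$. Then $A=p_1^\omega A$ and $p^\omega A=0$ for $p\ne p_1$, so $A\in\hat{K}_1$.
\item Take $B=\Z$. Then $p^\omega B=0$ for all $p$, so (2)(a) and (2)(b) hold trivially (use $g=0$, $z=0$). Hence $B\in \hat{K}_2$, and $B\neq0$.
\end{itemize}
Now consider $A\oplus B$.
\begin{itemize}
\item It is not in $\hat{K}_1$: $(0,1)$ is not divisible by $p_1$, so $p_1^\omega(A\oplus B)\ne A\oplus B$. (Alternatively, apply Proposition \ref{basic}(3) to $B\le_p A\oplus B$.)
\item It is not in $\hat{K}_2$: $p_1^\omega(A\oplus B)=A\oplus 0$, while $p_3^\omega$ and $p_4^\omega$ of $A\oplus B$ are $0$. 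For $g=(1,0)$, Condition (2)(b) would need $k\cdot(1,0)=0$ with $k>0$, which is impossible in a torsion-free group.
\end{itemize}

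The only delicate points are the common multiplier $k$ in (2)(b) for infinite sums, which the finite-support argument handles, and checking that the chosen $A$ and $B$ are genuinely in the respective classes. I expect neither to be a real obstacle.
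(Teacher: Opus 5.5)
Your proof is correct and follows the paper's argument: the identity $p^\omega(\bigoplus_i G_i)=\bigoplus_i p^\omega G_i$, a coordinatewise check of (2)(a), a common multiplier for (2)(b), and a counterexample pairing a nonzero member of $\hat{K}_1$ with a nonzero member of $\hat{K}_2$. The differences are minor: the paper treats only binary sums, and it uses $H=\Z[1/p_3]$ with $p_3^\omega H\neq 0$ where you use the simpler $B=\Z$. One notational slip needs fixing: $\Z_{(p_1)}$ standardly denotes the localization at $p_1$ (denominators prime to $p_1$), which lies in $\hat{K}_2$ and not in $\hat{K}_1$, so the group you describe and need should be written $\Z[1/p_1]$.
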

\begin{proof}
    Observe that for every $G, H$ abelian groups and $p$ a prime number, $p^\omega (G\oplus H) = p^\omega G \oplus p^\omega H$.
    \begin{enumerate}
        \item That $\hat{K}_1$ is closed under direct sums follows directly from the observation above, so we focus on $\hat{K}_2$. Let $G, H \in \hat{K}_2$. We check Conditions (2)(a) and (2)(b) of Definition \ref{class}.
        
        We check Condition (2)(a). Let $(g,h) \in p_1^\omega (G \oplus H) = p_1^\omega G \oplus p_1^\omega H$. Suppose we have $(y_1, z_1), (y_2, z_2) \in p_3^\omega G\oplus p_3^\omega H$ such that $(g+y_1, h+z_1), (g+y_2, h+z_2) \in p_4^\omega G \oplus p_4^\omega H$. Then $g+y_1, g+y_2 \in p_4^\omega G$. Since $G \in \hat{K}_2$, there is at most one $y \in p_3^\omega G$ such that $g+y \in p_4^\omega G$, so $y_1=y_2$. Similarly since $H\in \hat{K}_2$, $z_1=z_2$. Thus $G\oplus H$ satisfies Condition (2)(a).
        
        We check Condition (2)(b). Consider $(g,h) \in p_1^\omega (G\oplus H) = p_1^\omega G \oplus p_1^\omega H$. Since $G,H \in \hat{K}_2$, there are $k_1, k_2 \in \Z_{>0}$ and $y \in p_3^\omega G, z \in p_3^\omega H$ such that $k_1g+y \in p_4^\omega G, k_2h+z \in p_4^\omega H$. Note $k_2(k_1g+y) \in p_4^\omega G$ and $k_1(k_2h+z) \in p_4^\omega H$, so  $(k_1k_2g+k_2y, k_1k_2h+k_1z) = k_1k_2(g,h) + (k_2y, k_1z) \in p_4^\omega G\oplus p_4^\omega H$, $k_1k_2 \in \Z_{>0}$, and $(k_2y, k_1z) \in p_3^\omega G \oplus p_3^\omega H$, satisfying Condition (2)(b).
        
                \item Let $G \in \hat{K}_1$ with $G\ne 0$ and $H \in \hat{K}_2$ with $p_3^\omega H \neq 0$. This is possible by taking for example $G$ and $H$ to be the subgroups of $\mathbb{Q}$ generated by $\{ \frac{1}{p_1^n} : n < \omega \}$ and 
                $\{ \frac{1}{p_3^n} : n < \omega \}$ respectively. Suppose by
way of contradiction that $G \oplus H \in \hat{K}$. Let $0 \neq h \in p_3^\omega H$, then $(0, h) \in p_3^\omega (G \oplus H)$, so $G\oplus H \not\in \hat{K}_1$. Hence $G\oplus H \in \hat{K}_2$.

        Let $0 \neq g \in G$. Then $(g, 0)  \in G \oplus p_1^\omega H = p_1^\omega (G\oplus H)$. So there are $k\in \Z_{>0}$ and $(g', h') \in p_3^\omega (G\oplus H)$ such that $(kg+g', h') \in p_4^\omega (G\oplus H)$ as $G\oplus H \in \hat{K}_2$. Since $p_3^\omega (G\oplus H) = p_3^\omega G\oplus p_3^\omega H = 0\oplus p_3^\omega H$ as $G \in \hat{K}_1$, we have that $g'=0$. So $kg+g'=kg \in p_4^\omega G=0$ as $G \in \hat{K}_1$, so $kg=0$. Hence $g =0$, a contradiction as $g \neq 0$. \qedhere
    \end{enumerate}
\end{proof}

We obtain the second statement of Theorem \ref{maint}.
\begin{lemma}\label{lemma:JEP-NMM}
$\Kh$  has the joint embedding property and no maximal models.
\end{lemma}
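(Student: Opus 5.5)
The plan is to handle the two parts separately, using the dichotomy $\hat{K}=\hat{K}_1\cup\hat{K}_2$, closure of each $\hat{K}_\ell$ under direct sums (Proposition \ref{direct sums}(1)), and Proposition \ref{basic}. First I record a reformulation: Condition (2)(a) holds if and only if $p_3^\omega G\cap p_4^\omega G=0$. Indeed, if $g+z_1,g+z_2\in p_4^\omega G$ with $z_i\in p_3^\omega G$, then $z_1-z_2\in p_3^\omega G\cap p_4^\omega G$. Conversely, a nonzero $w$ in that intersection gives the two witnesses $z=0$ and $z=w$ for $g=0$.

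\emph{No maximal models.} If $G\in\hat{K}_1$, then $G\oplus \Z[1/p_1]\in\hat{K}_1$ by Proposition \ref{direct sums}(1), and it properly extends $G$ as a pure subgroup. (This includes $G=0$.) If $G\in\hat{K}_2$, consider $X=\Z[1/p_3]$. It is torsion-free with $p_1^\omega X=0$, so Condition (2)(b) is vacuous, and $p_3^\omega X\cap p_4^\omega X\subseteq X\cap p_4^\omega X=0$, so Condition (2)(a) holds. Hence $X\in\hat{K}_2$, and therefore $G\le_p G\oplus X\in\hat{K}_2$ is a proper extension.

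\emph{Joint embedding.} Let $G,H\in\hat{K}$. If both lie in the same $\hat{K}_\ell$, then $G\oplus H$ works by Proposition \ref{direct sums}(1), with both canonical embeddings pure. If one of them is $0$, the other group itself works. So the only remaining case is $0\neq G\in\hat{K}_1$ and $H\in\hat{K}_2$. The key step is to purely embed $G$ into some $N\in\hat{K}_2$; then $G\le_p N\le_p N\oplus H\in\hat{K}_2$ and $H\le_p N\oplus H$ finish the proof.

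For the construction, let $D=\Q\otimes G$ and work inside $D\oplus D$. Let $N$ be the subgroup generated by the following three sets:
\begin{enumerate}
\item $G\oplus 0$;
\item $0\oplus G_3$, where $G_3=\Z[1/p_3]\otimes G$;
\item $\Delta_4=\{(x,x):x\in\Z[1/p_4]\otimes G\}$.
\end{enumerate}
The intended witnesses are as follows. For $(g,0)$ with $g\in G$, take $k=1$ and $z=(0,g)\in p_3^\omega N$, so that $(g,0)+z=(g,g)\in p_4^\omega N$. For an element of the form $(0,y)$, take $z=(0,-y)$.

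The steps to verify, in order, are the following.
\begin{enumerate}
\item Compute $p^\omega N$ for each relevant prime: $p_1^\omega N$ should be generated by the three pieces above (all of which are $p_1$-divisible because $G$ is), $p_3^\omega N$ should be $0\oplus G_3$, and $p_4^\omega N$ should be $\Delta_4$.
\item Conclude that $p_3^\omega N\cap p_4^\omega N=0$, which is Condition (2)(a) by the reformulation. This uses the fact that $(x,x)\in 0\oplus D$ forces $x=0$.
\item Verify Condition (2)(b) on generators of $p_1^\omega N$ and extend to all of $p_1^\omega N$ by additivity. Multiplying the individual $k$'s together, as in the proof of Proposition \ref{direct sums}, handles sums.
\item Show $G\oplus 0\le_p N$. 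This reduces to showing that if $(a,0)\in N$ is $p$-divisible in $N$, then $a$ is $p$-divisible in $G$. Here one uses that $G$ has no $p_3$- or $p_4$-divisible elements and that the second coordinate must cancel.
\item Check that $N$ is torsion-free, which is immediate since $N\subseteq D\oplus D$.
\end{enumerate}

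I expect the main obstacle to be the explicit description of $N$, and in particular of $p_3^\omega N$ and $p_4^\omega N$, together with the purity of $G\oplus 0$. Mixing the three generating pieces could create unexpected divisibility, for instance a sum from $G\oplus 0$ and $\Delta_4$ becoming $p_3$-divisible. To control this I would write a general element of $N$ as $(a+x,\,y+x)$ with $a\in G$, $y\in G_3$, $x\in\Z[1/p_4]\otimes G$. I would then analyze divisibility coordinatewise, using that $p^\omega G=0$ for all $p\neq p_1$. If this presentation turns out to allow extra divisibility, I would adjust $N$ by adjoining only the needed fractions $(g,g)/p_4^n$ for $g\in G$, rather than all of $\Z[1/p_4]\otimes G$.
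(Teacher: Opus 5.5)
Your proposal is correct and is essentially the paper's argument. In both, the key step is a pure embedding of a $\hat{K}_1$-group into a $\hat{K}_2$-group, built by adjoining a $p_3$-divisible copy and a $p_4$-divisible diagonal, followed by a direct sum in $\hat{K}_2$. The extra divisibility you worried about does not occur: for $(u,v)\in N$ one has $u\in \Z[1/p_4]\otimes G$ and $u-v\in \Z[1/p_3]\otimes G$, and together with $p_3^\omega G=p_4^\omega G=0$ this gives exactly $p_3^\omega N=0\oplus G_3$ and $p_4^\omega N=\Delta_4$. That computation is where the vanishing hypotheses are used; purity of $G\oplus 0$ only needs $G_3\cap G_4=G$.

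The differences from the paper are minor. Copying all of $G$ rather than a maximal independent set makes your $N$ basis-free, with $N=p_1^\omega N$ and uniform witnesses $k=1$. Your treatment of no maximal models via $G\oplus\Z[1/p_1]$ or $G\oplus\Z[1/p_3]$ is also more explicit than the paper's ``similar argument''.
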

\begin{proof} We show first a claim.\medskip

    \underline{Claim}:  If $G \in \hat{K}$, then there is $H \in \hat{K}_2$ such that $G \leq_p H$.

    \underline{Proof of Claim}: If $G\in \hat{K}_2$, take $H=G$. So let $G\in \hat{K}_1$. Since $G$ is a torsion-free abelian group, there is a maximal linearly independent set $\{g_i : i\in I\} \subseteq G$. Note that we have a canonical isomorphism $\Q\otimes_\Z G \cong \bigoplus_{i\in I} \Q g_i$ for the injective hull $\Q\otimes_\Z G$ of $G$, where we may formally treat the $g_i$ on the right-hand side as free generators.    
    Match $\{g_i : i\in I\}$ with a set of free generators $\{z_i : i\in I\}$ and let $M := \Q \otimes_\Z G \oplus \bigoplus_{i\in I} \Q z_i$. In light of our canonical isomorphism $\Q\otimes_\Z G \cong \bigoplus_{i\in I} \Q g_i$, it will be more convenient to view $M$ as $M = \bigoplus_{i\in I} \Q g_i \oplus \bigoplus_{i\in I} \Q z_i$ 
    with free generators $\mathcal{S} = \{g_i,z_i : i\in I\}$ and to embed $G\le \Q\otimes_\Z G$ as a subgroup of $M$ by identifying $1\otimes g_i \in  \Q\otimes_\Z G$ with $g_i \in M$.

    Let $H:= \langle G, p_3^{-n} z_i, p_4^{-n}(g_i+z_i) : i\in I, n<\omega\rangle \leq M$. As it will be useful later, note that $\mathcal{S} \subseteq H$, $\mathcal{S}$ is a maximal linearly independent set in $H$, and $g_i \neq 0$ for every $i \in I$. 

    We next check $G \leq_p H$ by showing that $qG = qH \cap G$ for every prime $q$. Note that we only need to check $qH \cap G \subseteq qG$. Let $h \in qH \cap G$. In particular, $h=qh_0$ for some $h_0 \in H$. By definition of $H$, $h_0 = g + \sum_{i\in I} r_i z_i + \sum_{i\in I} s_i(g_i+z_i)$ for suitable $g\in G$, $r_i \in \Z[1/p_3]$, and $s_i \in \Z[1/p_4]$. Now $qh_0 = qg + \sum_{i\in I} qr_i z_i + \sum_{i\in I} qs_i(g_i+z_i) \in G \leq \bigoplus_{i\in I} \Q g_i$. Comparing coefficients of $z_i$ yields $r_i+s_i=0$ for all $i\in I$, so $s_i = -r_i \in \Z[1/p_3] \cap \Z[1/p_4] = \Z$. Thus $h_0 = g + \sum_{i\in I} (r_i z_i + s_i(g_i+z_i)) = g+\sum_{i\in I} s_i g_i \in G$ as each $s_i g_i \in G$.   
    
    It remains to prove that $H\in \hat{K}_2$.

We check    Condition (2)(a). Let $h \in p_1^\omega H$ be arbitrary, and suppose we have $z,z' \in p_3^\omega H$ such that $h+z, h+z' \in p_4^\omega H$. Then $z-z' \in p_3^\omega H \cap p_4^\omega H$. To conclude the proof of Condition (2)(a), we prove that $p_3^\omega H \cap p_4^\omega H = 0$.

    Let $h \in p_3^\omega H \cap p_4^\omega H$. Since $h\in H$ and $\mathcal{S}$ is a maximal linearly independent set in $H$, there are $n \neq 0, a_i, b_i \in \Z$ such that $nh = \sum_{i \in I} a_i g_i + \sum_{i \in I} b_i z_i$. But since $nh \in p_3^\omega H \cap p_4^\omega H$, we must have that $p_3^\infty p_4^\infty | \sum_{i \in I} (a_ig_i+b_iz_i)$. Since there are no relations between generators of different indices, we then have that $p_3^\infty p_4^\infty | (a_ig_i+b_iz_i)$ for all $i\in I$.

    Note that $p_3^\infty | z_i$, so $p_3^\infty | (a_i g_i + b_i z_i - b_i z_i)= a_i g_i$, and hence either $a_i=0$ or $p_3^\infty | g_i$. But $g_i \in G\leq_p H$ and $p_3^\omega G=0$, so if $a_i \neq 0$, we have $g_i=0$, a contradiction. Thus $a_i=0$.

    Now $p_3^\infty p_4^\infty | (a_ig_i+b_iz_i)$ means $p_3^\infty p_4^\infty | b_i z_i$. Note $p_4^\infty | (g_i+z_i)$, so $p_4^\infty | (b_i z_i - b_i(g_i+z_i))=-b_ig_i$, and hence either $b_i=0$ or $p_4^\infty | g_i$. But $ g_i \in G\leq_p H$ and $p_4^\omega G=0$, so if $b_i \neq 0$, we have that $g_i=0$, a contradiction. Thus $b_i=0$. Hence every $a_i, b_i = 0$, so $nh=0$ and $h=0$. So $p_3^\omega H \cap p_4^\omega H = 0$ and $H$ satisfies Condition (2)(a).

   We check Condition (2)(b). Let $h \in p_1^\omega H$ be arbitrary. Then there exist $k\in \Z_{>0}$ and $a_i, b_i \in \Z$ such that $kh = \sum_{i \in I} a_i g_i + \sum_{i \in I} b_i z_i$. Let $z = \sum_{i \in I} (a_i - b_i)z_i\in p_3^\omega H$. Then $kh+z = \sum_{i \in I} a_i(g_i+z_i) \in p_4^\omega H$, so $H$ satisfies Condition~(2)(b). Hence $H \in \hat{K}_2$. $\dagger_{\text{Claim}}$ \medskip

We show the joint embedding property. No maximal models can be shown with a similar argument. Let $G, H \in \hat{K}$. Then by the Claim there are $G_1 \in \hat{K}_2$ and $H_1 \in \hat{K}_2$ such that $G \leq_p G_1$ and $H \leq_p H_1$. As $\hat{K}_2$ is closed under direct sums by Proposition \ref{direct sums}, $G_1 \oplus H_1 \in \hat{K}_2$. Hence $G, H$ can be purely embed into $G_1 \oplus H_1 \in \hat{K}$. \end{proof}

In order to better understand Galois types, we show that $\Kh$ admits intersections and that the closure $\Cl_{\Kh}^G(A):= \bigcap \{H : A\subseteq H \leq_p G$ and $H\in \hat{K} \}$ can be constructed in a simple way.
\begin{prop}\label{closure}
   Let $A \subseteq G \in \hat{K}$.
   
   \begin{enumerate}
   \item If there is $H' \in \hat{K}_1$ such that $A \subseteq H' \leq_p G$, then $\Cl_{\Kh}^G(A) = A_0 \cup A_1 \cup A_2 = A_2 \in \hat{K}_1$ and $\| \Cl_{\Kh}^G(A) \| \leq |A| + \aleph_0 $ where
   \begin{itemize}
   \item $A_0 =A$,
   \item $A_1 = \langle A\rangle$,
   \item  $A_2 = \{g\in G : kg \in A_1 \mbox{ for some } k \in \Z_{>0}\}$.
   \end{itemize}
   
   \item If there is no $H' \in \hat{K}_1$ such that $A \subseteq H' \leq_p G$, then $\Cl_{\Kh}^G(A) = \bigcup_{n<\omega} A_n \in \hat{K}_2$ and $\| \Cl_{\Kh}^G(A) \| \leq |A| + \aleph_0 $ where
   
   \begin{itemize}
        \item $A_0 = A$,
        \item $A_{3n+1} = \langle A_{3n} \rangle$,
        \item $A_{3n+2} = \{g\in G :  kg \in A_{3n + 1} \mbox{ for some } k \in \Z_{>0}\}$,
        \item $A_{3n+3} =  A_{3n+2} \cup \{z_g : g\in A_{3n+2} \cap p_1^\omega G\}$ for a fixed choice of elements $z_g \in p_3^\omega G$ such that there is $k_g \in \Z_{>0}$ with $k_g g + z_g \in p_4^\omega G$.\footnote{Observe $z_g$ exists by Condition (2)(b) of Definition \ref{class} as $G \in \hat{K}_2$ and $g\in A_{3n+2} \cap p_1^\omega G$.}
   
    \end{itemize}
   \end{enumerate}
   
   In particular, $\Kh$ admits intersections and $\LS(\Kh)=\aleph_0$.

\end{prop}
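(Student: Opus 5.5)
The proof has the same shape in both cases. First I show that the described set $B$ (which is $A_2$ in case (1) and $\bigcup_n A_n$ in case (2)) is contained in every $H\in\hat{K}$ with $A\subseteq H\le_p G$. Then I show that $B$ itself is a member of $\hat K$ with $A \subseteq B\le_p G$. Together these give $\Cl^G_{\Kh}(A)=B$, and $B$ belongs to the intersection family. The cardinality bound $\|B\|\le |A|+\aleph_0$ follows stage by stage:
\begin{itemize}
\item generating a subgroup keeps size at most $|A_{3n}|+\aleph_0$;
\item passing to $A_{3n+2}$ adds at most countably many roots per element, by torsion-freeness (for each $k$ and each $a$ there is at most one $g$ with $kg=a$);
\item adding the $z_g$ adds one element per $g$.
\end{itemize}
Admitting intersections and $\LS(\Kh)=\aleph_0$ then follow immediately. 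Given $A\subseteq N$ we apply (1) or (2). For $\LS$, given $A\subseteq G$ the closure is a pure $\Kh$-substructure of size $\le |A|+\aleph_0$.

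\textbf{Case (1).} $A_2$ is the pure closure of $\langle A\rangle$ in $G$, so $A_2\le_p G$. Indeed, if $qx=a\in A_2$ with $x\in G$ and $ka\in A_1$, then $kqx\in A_1$, so $x\in A_2$. Any pure $H\supseteq A$ contains $\langle A\rangle$ and, by the Fact, every $g$ with $kg\in\langle A\rangle$; hence $A_2\subseteq H$. By hypothesis $A_2\le_p H'\in\hat K_1$: we have $A_2 \subseteq H'$ by the minimality just shown, and $A_2$ is pure in $G$, hence pure in $H'$. Proposition \ref{basic}(2) then gives $A_2\in\hat K_1$.

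\textbf{Case (2), minimality.} First, $G\in\hat K_2$, since otherwise $G$ itself would serve as $H'$. Let $H\in\hat K$ with $A\subseteq H\le_p G$. Then $H\notin\hat K_1$ by hypothesis, so $H\in\hat K_2$. I show by induction that $A_n\subseteq H$. The stages $3n+1$ and $3n+2$ are as in case (1).

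The stage $3n+3$ is the key point. Take $g\in A_{3n+2}\cap p_1^\omega G$; then $g\in p_1^\omega H$ by the Fact. Since $H\in\hat K_2$, there are $k$ and $z\in p_3^\omega H$ with $kg+z\in p_4^\omega H\subseteq p_4^\omega G$. We also have $k_g g+z_g\in p_4^\omega G$. Multiplying, both $kk_g g+k_g z$ and $kk_g g+kz_g$ lie in $p_4^\omega G$, and $k_g z, kz_g\in p_3^\omega G$. Applying Condition (2)(a) in $G$ to the element $kk_g g\in p_1^\omega G$ gives $k_g z=kz_g$. Hence $kz_g\in H$, and purity of $H$ gives $z_g\in H$. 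This uniqueness argument, which forces the chosen witnesses $z_g$ to be canonical, is the main obstacle. It is exactly where (2)(a) is used, and it is why the construction does not depend on the choice of the $z_g$.

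\textbf{Case (2), membership.} Let $B=\bigcup_n A_n$. Because the stages alternate and the $A_n$ increase, $B$ is a subgroup and pure in $G$: any relation $qx=b$ with $x\in G$ and $b$ in some $A_{3n+1}$ places $x$ in $A_{3n+2}$. To see $B\in \hat K_2$:
\begin{itemize}
\item (2)(a) transfers directly from $G$, since $p_\ell^\omega B=B\cap p_\ell^\omega G$.
\item For (2)(b), take $g\in p_1^\omega B$. Then $g\in A_{3n+2}\cap p_1^\omega G$ for some $n$, so $z_g\in A_{3n+3}\subseteq B$. Moreover $z_g\in p_3^\omega B$ and $k_g g+z_g\in B\cap p_4^\omega G=p_4^\omega B$, by the Fact.
\end{itemize}
Thus $B\in\hat K$, since $\hat K_2\subseteq\hat K$.
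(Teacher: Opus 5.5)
Your proposal is correct and follows the paper's proof. The key step at stage $3n+3$ is the same: you apply Condition (2)(a) in $G$ to obtain $k_g z = k z_g$, then use purity of $H$ to get $z_g\in H$. Membership of $\bigcup_n A_n$ in $\hat K_2$ likewise comes from (2)(a) inherited from $G$ together with the $z_g$-stage. The differences are minor. In case (1) you show $A_2\le_p G$ directly and place it inside $H'$, instead of first reducing to $G\in\hat K_1$. You also state why $H\in\hat K_2$ and carry out the purity and cardinality checks, which the paper leaves as ``straightforward.''
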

\begin{proof}
The \emph{in particular} is clear.
\begin{enumerate}
    \item   Note that $H,H'\leq_p G$ for torsion-free groups implies $H\cap H' \leq_p H'$. Thus, we may assume without loss of generality that $G \in \hat{K}_1$ as $\bigcap \{H : A\subseteq H \leq_p G$ and $H\in \hat{K} \} = \bigcap \{H : A\subseteq H \leq_p H'$ and $H \in \hat{K}_1 \}$ by Proposition~\ref{basic}(2). It is straightforward to show that  $A \subseteq A_2 \leq_p G$, $A_2 \in \hat{K}_1$ and $\| A_2\| \leq |A| + \aleph_0$. 
    
     It follows from the previous paragraph that $\bigcap \{H : A\subseteq H \leq_p G$ and $H\in \hat{K} \} \subseteq A_2$. The other inclusion follows from the fact that $A_2 \subseteq H$ for every $ H \leq_p G$ with $A \subseteq H$ by uniqueness of divisors in torsion-free abelian groups.

    \item  Observe that $G \in \hat{K}_2$. Let $A_\omega = \bigcup_{n < \omega} A_n$. It can be shown that $A \subseteq A_\omega \leq_p G$, $\| A_\omega \| \leq |A| + \aleph_0$ and  $A_\omega \in \hat{K}_2$. Condition (2)(b) is satisfied due to the $A_{3n+2}$ construction step while Condition (2)(a) is inherited from $G$.

    We show $A_\omega = \bigcap \{H : A\subseteq H \leq_p G$ and $H\in \hat{K} \}$.  We only need to show the forward inclusion by the previous paragraph.
    
Fix $H$ such that $A \subseteq H \leq_p G$. We show by induction on $n <\omega$ that $A_n \subseteq H$. The base case and the case when $n=3m +1$ are clear so we do the other two cases.

Assume $n=3m+2$. Let $kg \in A_{3m+1}$ for $k \in \Z_{>0}$. Since $A_{3m+1} \subseteq H$ by induction hypothesis and $H \leq_p G$, there is $h \in H$ such that $kh=kg$. As $G$ is torsion-free, $g=h \in H$. Hence $A_{3m+2} \subseteq H$.

Assume  $n=3m+3$.  Let $g \in A_{3m+2} \cap p_1^\omega G$. Since $A_{3m+2} \subseteq H$ by induction hypothesis and  $H \leq_p G$,  $g\in p_1^\omega H$. Since $H \in \hat{K}_2$, there are $k \in \Z_{> 0}$ and $z \in p_3^\omega H$ such that $kg+z \in p_4^\omega H$, so $z\in p_3^\omega G$ and $kg+z \in p_4^\omega G$. Note that by construction, we have $z_g \in p_3^\omega G$ and $k_g g+z_g \in p_4^\omega G$. Hence $k_g kg + k_g z, k_gk  g + k z_g \in p_4^\omega G$. Then $k_g z = k z_g$ as $G \in \hat{K}_2$.

  As $k_g z \in k G$ and $k_g z \in H \leq_p G$, there is $h \in H$ such that $kh=k_gz$. Then $kh=kz_g$. Hence $z_g = h \in H$ because $G$ is torsion-free. Hence $A_{3m+3} \subseteq H$. \qedhere  
    \end{enumerate}
\end{proof}

The following classes were introduced in  \cite[Definition 3.1]{vaseyu}. 

\begin{defin} Let $\Kk$ be an AEC. We say that $\Kk$ is \emph{pseudo-universal} if $\Kk$ admits intersections and for any $N_1, N_2 \in \Kk$ and $\bar{a}_1 \in N_1$, $\bar{a}_2 \in N_2$ if $f,g: \Cl^{N_1}_{\Kk}(\bar{a}_1) \cong \Cl^{N_2}_{\Kk}(\bar{a}_2)$ and $f(\bar{a}_1)=g(\bar{a}_1)=\bar{a}_2$, then $f=g$.\footnote{The definition in \cite{vaseyu} has the additional assumption in the antecedent that $\gtp_{\Kk}(\bar{a}_1/\emptyset; N_1) = \gtp_{\Kk}(\bar{a}_2/\emptyset; N_2)$, but this already follows from the existence of $f,g$.}
\end{defin}

We obtain the last statement of Theorem \ref{maint}.

\begin{theorem}\label{Thm:tame}
    $\Kh$ is pseudo-universal. In particular, $\Kh$ is $(<\aleph_0)$-tame. 
\end{theorem}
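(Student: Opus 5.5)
We need to show pseudo-universality: if $f,g:\Cl^{N_1}_{\Kh}(\bar a_1)\cong \Cl^{N_2}_{\Kh}(\bar a_2)$ both send $\bar a_1$ to $\bar a_2$, then $f=g$. The $(<\aleph_0)$-tameness then follows from the general result of \cite{vaseyu} that pseudo-universal AECs are $(<\aleph_0)$-tame; since $\Kh$ admits intersections by Proposition \ref{closure}, only the uniqueness statement needs proof. The plan is to use the explicit description of $\Cl^{N_1}_{\Kh}(\bar a_1)$ in Proposition \ref{closure}. We show by induction on $n$ that $f$ and $g$ agree on each stage $A_n$, where the $A_n$ are computed inside $N_1$ from $A=\operatorname{ran}(\bar a_1)$. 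Write $C_1=\Cl^{N_1}_{\Kh}(\bar a_1)$ and $h = g^{-1}\circ f$. Then $h$ is an automorphism of $C_1$ fixing $\bar a_1$, and it suffices to show $h=\id$.

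\textbf{Base and first steps.} On $A_0$, $h$ is the identity by assumption. Since $h$ is a homomorphism, it fixes $A_1=\langle A_0\rangle$. For $A_2$ (or $A_{3n+2}$), take $x$ with $kx\in A_{3n+1}$ and $k>0$. Then $k\,h(x)=h(kx)=kx$, and since $C_1$ is torsion-free, $h(x)=x$. In case (1) of Proposition \ref{closure} this already covers all of $C_1=A_2$, so we are done there.

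\textbf{Main step.} In case (2) we must handle $A_{3n+3}$, i.e.\ show $h(z_x)=z_x$ for $x\in A_{3n+2}\cap p_1^\omega N_1$. This step is the obstacle, because $z_x$ is not determined by divisibility alone but by condition (2)(a). The argument runs as follows.
\begin{itemize}
\item $C_1\in\hat K_2$ by Proposition \ref{closure}(2), and $C_1\le_p N_1$. Hence $x\in p_1^\omega C_1$, $z_x\in p_3^\omega C_1$ and $k_xx+z_x\in p_4^\omega C_1$, all computed inside $C_1$. (For instance, $z_x\in C_1\cap p_3^\omega N_1=p_3^\omega C_1$ by purity.)
\item $h$ is an automorphism of $C_1$, so it preserves $p^\omega C_1$ for every prime $p$. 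Therefore $h(z_x)\in p_3^\omega C_1$ and $k_x h(x)+h(z_x)=k_xx+h(z_x)\in p_4^\omega C_1$.
\item Condition (2)(a) is stated for $g\in p_1^\omega$ with coefficient $1$, but here we have $k_xx$. Note that $k_xx\in p_1^\omega C_1$, since $p_1^\omega C_1$ is a subgroup. Applying (2)(a) in $C_1$ to the element $k_xx$ with the two candidates $z_x$ and $h(z_x)$ gives $h(z_x)=z_x$.
\end{itemize}

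By induction $h$ fixes $\bigcup_n A_n=C_1$, so $f=g$.

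One remaining point is the mismatch between $f,g$ being isomorphisms between the two closures and $h$ being an endomorphism of one closure. This is handled by composing with $g^{-1}$ as above. Alternatively, one can compare $f$ and $g$ directly, using that both map $p^\omega C_1$ onto $p^\omega C_2$ and that $C_2\in\hat K_2$ satisfies (2)(a). In the degenerate case where $C_1=0$ or $C_1\in\hat K_1$, case (1) applies, and there is nothing further to check.
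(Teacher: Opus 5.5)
Your proof is correct and follows the paper's argument: induction along the stages $A_n$ of Proposition \ref{closure}, with torsion-freeness handling the divisibility stages and Condition (2)(a), applied to $k_x x\in p_1^\omega$, handling the $A_{3n+3}$ stage. The only difference is cosmetic. You pass to the automorphism $h=g^{-1}\circ f$ of $C_1$ and apply (2)(a) inside $C_1\in\hat{K}_2$. The paper instead compares $f$ and $g$ directly in the ambient group $G_2$, which needs a small detour through $z_a\neq 0$ to see that $G_2\in\hat{K}_2$.
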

\begin{proof}

The \emph{in particular} follows from \cite[Theorem 3.7]{vaseyu}. We show that $\Kh$ is pseudo-universal.

    Suppose  $f,g : \Cl_{\Kh}^{G_1}(\bar{a}_1) \cong \Cl_{\Kh}^{G_2}(\bar{a}_2)$ with $f(\bar{a}_1)=g(\bar{a}_1)=\bar{a}_2$. We show that $f=g$. We divide the proof into two cases based on Proposition \ref{closure}.
    
    \underline{Case 1}: There is an $H' \in \hat{K}_1$ such that $\bar{a}_1 \in  H' \leq_p G_1$. We show that ${f\restr A_n} = g\restr A_n$ for $n \in \{0, 1, 2\}$, where $A_0 =\{\bar{a}_1\}$. It is clear that $ f\restr A_0 = g\restr A_0$  and  $f\restr A_1 = g\restr A_1$.

    Let $ka \in A_1$   for $k \in \Z_{>0}$. Then $f(ka) = g(ka)$ as  $f\restr A_1 = g\restr A_1$. Since $f, g$ are morphisms, $ kf(a)=kg(a)$. Hence $f(a)=g(a)$ because $G_2$ is torsion-free. Hence $f\restr A_2 = g\restr A_2$.
        
    \underline{Case 2}: There is no $H' \in \hat{K}_1$ such that $\bar{a}_1 \in H' \leq_p G_1$.  We show that  $f\restr A_n = g\restr A_n$ by induction on $n < \omega$, where $A_0 =\{\bar{a}_1\}$. 
        
        The cases when $n=0, 3m+1, 3m+2$ can be done as in Case 1, so we only do the induction step when $n=3m+3$.

        Let $z_a \in A_{3m+3}$ be such that $a\in A_{3m+2} \cap p_1^\omega G_1$, $k_a a+z_a \in p_4^\omega G_1$, $z_a \in p_3^\omega G_1$ and $k_a \in \Z_{>0}$. We may assume that $z_a \neq 0$ as otherwise $f(z_a)=0=g(z_a)$.
        
       Observe that $f(k_aa+z_a) = k_af(a)+f(z_a)  \in p_4^\omega G_2$ and $f(z_a) \in p_3^\omega G_2$ since $a, z_a, k_aa+z_a \in \Cl_{\Kh}^{G_1}(\bar{a}_1) \leq_p G_1$ and $f$ is a morphism. Similarly  $g(k_aa+z_a)=k_ag(a)+g(z_a) \in p_4^\omega G_2$ and $g(z_a) \in p_3^\omega G_2$. Since $g(a) = f(a)$ by induction hypothesis, we have that $k_af(a)+g(z_a) \in p_4^\omega G_2$. As $G_2 \in \hat{K}_2$ because $G_2 \in \hat{K}$ and $0\neq g(z_a) \in p_3^\omega G_2$, it follows by Condition (2)(a) that $f(z_a) = g(z_a)$. Hence $f \restr A_{3m+3} = g\restr A_{3m+3}$. \qedhere
\end{proof}
    
\begin{remark} It is unclear to us if the example of 
\cite{ps} is $(<\aleph_0)$-tame or has the joint embedding property.
\end{remark}

  We now turn to show that $\Kh$ is not stable. In order to do that we will need to construct some specific abelian groups and show that they are in $\Kh$. 
  
\begin{defin}\label{groups}
    Let $\lambda$ be an infinite cardinal, let $\{x_\alpha, z_\alpha : \alpha < \lambda\}$ be a set of free generators, and let $p_5$ be a prime distinct to $p_1, p_3, p_4$. 
    
    \begin{enumerate}
        \item Let $M_\lambda:= \bigoplus_{\alpha<\lambda} \Q x_\alpha  \oplus  \bigoplus_{\alpha<\lambda} \Q z_\alpha$.
        \item  Let $G_\lambda := \left\langle p_1^{-n} x_\alpha : n<\omega, \alpha < \lambda \right\rangle_{M_\lambda} \leq M_\lambda$.
        
        \item If $\mcU \subseteq \lambda$, let  \[G_\mcU := \big\langle p_1^{-n} x_\alpha, p_3^{-n} z_\alpha, p_4^{-n}(x_\alpha+z_\alpha), p_5^{-n} z_\beta : n<\omega, \alpha<\lambda, \beta\in \mcU \big\rangle_{M_\lambda}.\]
    \end{enumerate}
\end{defin}

\begin{prop}\label{div-G-U}
    Let $\lambda$ be an infinite cardinal and $\mcU \subseteq \lambda$.

    \begin{enumerate}
        \item $p_1^\omega G_\mcU = \langle p_1^{-n} x_\alpha :  n <\omega, \alpha<\lambda \rangle =G_\lambda$.
        \item $p_3^\omega G_\mcU = \langle p_3^{-n} z_\alpha, p_5^{-n} z_\beta :  n <\omega, \alpha<\lambda, \beta \in \mcU \rangle$.
        \item $p_4^\omega G_\mcU = \langle p_4^{-n} (x_\alpha+z_\alpha) :  n <\omega, \alpha<\lambda \rangle$.
        \item $p_5^\omega G_\mcU = \langle p_3^{-n} z_\beta, p_5^{-n} z_\beta :  n <\omega, \beta \in \mcU \rangle$.
    \end{enumerate}
\end{prop}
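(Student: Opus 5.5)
The plan is to reduce everything to a rank-two computation and then use $q$-adic valuations of rational coefficients. First, $G_\mcU = \bigoplus_{\alpha<\lambda} G^\alpha$ inside $M_\lambda$, where
\[G^\alpha := \big\langle p_1^{-n}x_\alpha,\ p_3^{-n}z_\alpha,\ p_4^{-n}(x_\alpha+z_\alpha),\ p_5^{-n}z_\alpha \text{ (the last only if } \alpha\in\mcU) : n<\omega\big\rangle \le \Q x_\alpha\oplus \Q z_\alpha .\]
This holds because every generator lives in a single summand $\Q x_\alpha\oplus\Q z_\alpha$. As observed in the proof of Proposition~\ref{direct sums}, $p^\omega$ commutes with direct sums. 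Hence it suffices to compute $p^\omega G^\alpha$ for each of the primes $p_1,p_3,p_4,p_5$ and then take the direct sum over $\alpha$. The right-hand sides in (1)--(4) split in the same way, coordinatewise.

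Next I will describe $G^\alpha$ explicitly. Write $x=x_\alpha$ and $z=z_\alpha$. A general element of $G^\alpha$ is $rx+sz+t(x+z)+uz$ with $r\in\Z[1/p_1]$, $s\in\Z[1/p_3]$, $t\in\Z[1/p_4]$, and $u\in\Z[1/p_5]$, where $u=0$ if $\alpha\notin\mcU$. I claim that $cx+dz\in \Q x\oplus\Q z$ lies in $G^\alpha$ if and only if the following valuation conditions hold:
\begin{itemize}
\item $v_q(c),v_q(d)\ge 0$ for every prime $q\notin\{p_1,p_3,p_4,p_5\}$;
\item $v_{p_5}(c),v_{p_5}(d)\ge 0$ when $\alpha\notin\mcU$, and $v_{p_5}(c)\ge 0$ when $\alpha\in\mcU$;
\item $v_{p_1}(d)\ge 0$;
\item $v_{p_3}(c)\ge0$;
\item $v_{p_4}(c)\ge 0$ or $v_{p_4}(d)\ge0$, which is equivalent to both given $v_{p_4}(d-c)\ge 0$; precisely, the condition is $v_{p_4}(d-c)\ge 0$.
\end{itemize}
Necessity is checked on generators. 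For sufficiency, I decompose $c$ into partial fractions, $c=r+t$ with $r\in\Z[1/p_1]$ and $t\in\Z[1/p_4]$; this uses $v_{p_3}(c),v_{p_5}(c)\ge0$. Then $d-t$ has nonnegative valuation at $p_4$, because $v_{p_4}(d-c)\ge0$ and $v_{p_4}(r)\ge0$, and also at $p_1$. So $d-t$ splits as $s+u$ with $s\in\Z[1/p_3]$ and $u\in\Z[1/p_5]$ (with $u=0$ if $\alpha\notin\mcU$). I expect this bookkeeping, and in particular handling the mixed $p_4$-condition correctly, to be the main obstacle. Everything afterwards is mechanical.

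Then $p^\omega G^\alpha$ is read off from the valuation conditions. An element $y=cx+dz$ lies in $p^\omega G^\alpha$ iff $p^{-n}y$ satisfies the conditions for all $n$. For $p=p_1$, this forces $d=0$, since otherwise $v_{p_1}(p_1^{-n}d)<0$ eventually. The remaining conditions on $c$ say exactly $c\in\Z[1/p_1]$. So $p_1^\omega G^\alpha=\langle p_1^{-n}x\rangle$, which gives (1).

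For $p=p_3$, the condition $v_{p_3}(c)\ge0$ forces $c=0$. Then $d$ may have denominators only at $p_3$, and at $p_5$ if $\alpha\in\mcU$. The $p_4$-condition $v_{p_4}(d)\ge0$ is automatic. This gives (2).

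For $p=p_4$, the condition $v_{p_4}(p_4^{-n}(d-c))\ge0$ for all $n$ forces $c=d$, and then $c\in\Z[1/p_4]$. This gives (3).

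For $p=p_5$, if $\alpha\notin\mcU$ we get $0$. If $\alpha\in\mcU$, the condition $v_{p_5}(c)\ge 0$ forces $c=0$, and then $d\in\Z[1/p_3,1/p_5]$, which is generated by the $p_3^{-n}z$ and $p_5^{-n}z$. This gives (4).

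Finally, I take direct sums over $\alpha<\lambda$ to obtain the stated descriptions. The inclusions ``$\supseteq$'' are in any case immediate, since each listed generator is visibly divisible in $G_\mcU$ by every power of the relevant prime.
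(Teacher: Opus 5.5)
Your proof is correct and rests on the same mechanism as the paper's. The paper also works coordinatewise with $g=\sum_\alpha(r_\alpha x_\alpha+s_\alpha z_\alpha)$ and records the following:
\begin{itemize}
\item $r_\alpha\in\Z[1/p_1,1/p_4]$;
\item $s_\alpha\in\Z[1/p_3,1/p_4,1/p_5]$, with no $p_5$ for $\alpha\notin\mcU$;
\item $r_\alpha-s_\alpha\in\Z[1/p_1,1/p_3,1/p_5]$.
\end{itemize}
These are exactly your valuation conditions. The paper then uses $p^\infty$-divisibility to force $s_\alpha$, $r_\alpha$, or $r_\alpha-s_\alpha$ to vanish. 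Your direct-sum reduction to rank-two pieces is only a tidier packaging of the same bookkeeping.

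The real difference is how you get back into the listed subgroups. The paper treats the conditions only as necessary ones. It then returns to the explicit representation $r_\alpha=p_1^{-n_1}i_\alpha+p_4^{-n_4}k_\alpha$, $s_\alpha=p_3^{-n_3}j_\alpha+p_4^{-n_4}k_\alpha+p_5^{-n_5}\ell_\alpha$; for example, in (1) the fact $s_\alpha=0$ forces $p_4^{-n_4}k_\alpha\in\Z$. It proves the inclusions $\supseteq$ by hand, with a B\'ezout identity for $p_5^{-n}z_\beta\in p_3^\omega G_\mcU$. You instead prove, by partial fractions, that the conditions are also sufficient, so each $p^\omega G^\alpha$ can be read off uniformly as an ``iff''.

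The sufficiency direction, which you flagged as the main obstacle, is not needed for $\subseteq$ at all. Its real payoff is $\supseteq$. There, memberships such as $p_5^{-n}z_\beta\in p_3^\omega G_\mcU$ and $p_3^{-n}z_\beta\in p_5^\omega G_\mcU$ are not quite ``visible'' as you claim, and your criterion supplies them.

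One slip: the first half of your $p_4$ bullet, ``$v_{p_4}(c)\ge0$ or $v_{p_4}(d)\ge0$'', is not a correct condition. The pair $c=0$, $d=p_4^{-1}$ satisfies it, yet $p_4^{-1}z_\alpha\notin G^\alpha$. That bullet should simply read $v_{p_4}(d-c)\ge0$, which is the condition you actually use.
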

\begin{proof} 
    Observe that for any $n<\omega$ and $\alpha<\lambda$, $p_3^{-n} z_\alpha \in p_3^\omega G_\mcU$ since for any $k<\omega$, $p_3^{-n} z_\alpha = p_3^k (p_3^{-n-k} z_\alpha)$. Moreover, for any $\beta \in \mcU$, choosing $s,t \in \Z$ with $sp_3^k+tp_5^n =1$, we have $p_5^{-n} z_\beta = p_5^{-n} (sp_3^k+tp_5^n) z_\beta = p_3^k (sp_5^{-n}z_\beta+tp_3^{-k} z_\beta)$. Thus $p_3^\omega G_\mcU \supseteq \langle p_3^{-n} z_\alpha, p_5^{-n} z_\beta :  n <\omega, \alpha<\lambda, \beta \in \mcU \rangle$. The other three cases of $p_i^\omega G_\mcU$ are similar. Thus it remains to prove the reverse containment.

    Let $g \in G_\mcU$ be arbitrary. Since $G_\mcU \leq M_\lambda = \bigoplus_{\alpha<\lambda} \Q x_\alpha \oplus \bigoplus_{\alpha<\lambda} \Q z_\alpha$, we can express
    \[
        g = \sum_{\alpha <\lambda} r_\alpha x_{\alpha} + \sum_{\alpha <\lambda} s_\alpha z_{\alpha}
    \]
    as a finite sum where $r_\alpha, s_\alpha \in \Q$ for all $\alpha<\lambda$ and almost all $r_\alpha, s_\alpha$ are zero. Since $g\in G_\mcU$, we can actually express more precisely what each of the $r_\alpha, s_\alpha$ are. First note that we can write
    \[
        g = \sum_{\alpha <\lambda} \Big( p_1^{-n_1} i_\alpha x_\alpha + p_3^{-n_3} j_\alpha z_\alpha + p_4^{-n_4} k_\alpha(x_\alpha + z_\alpha) + p_5^{-n_5} \ell_\alpha z_\alpha \Big),
    \]
    where $n_1, n_3, n_4, n_5 < \omega$ and $i_\alpha, j_\alpha, k_\alpha, \ell_\alpha \in \Z$ with almost all $i_\alpha, j_\alpha, k_\alpha, \ell_\alpha$ zero. Note that $\ell_\alpha = 0$ for every $\alpha \not\in \mcU$. Then all $r_\alpha$ and $s_\alpha$ are of the following form:
    \begin{align*}
        r_\alpha &= p_1^{-n_1} i_{\alpha} + p_4^{-n_4} k_{\alpha}, \\
        s_\alpha &= p_3^{-n_3} j_{\alpha} + p_4^{-n_4} k_{\alpha} + p_5^{-n_5} \ell_{\alpha}.
    \end{align*}

    Observe that the above lines actually prove that the coefficients of $x_\alpha$ are elements of $\Z[1/p_1, 1/p_4]$, and the coefficients of $z_\alpha$ are elements of $\Z[1/p_3, 1/p_4, 1/p_5]$ for all $\alpha<\lambda$. More subtly, $s_\alpha\in \Z[1/p_3, 1/p_4]$ if $\alpha \not\in \mcU$. We also have $r_\alpha-s_\alpha = p_1^{-n_1} i_{\alpha} -p_3^{-n_3} j_{\alpha} - p_5^{-n_5} \ell_{\alpha}$, thus $r_\alpha-s_\alpha \in \Z[1/p_1, 1/p_3, 1/p_5]$ for all $\alpha<\lambda$.

    \begin{enumerate}
        \item Suppose $g\in p_1^\omega G_\mcU$. Then $p_1^\infty | g$, so we have that $s_\alpha$ is $p_1^\infty$-divisible in $\Z[1/p_3, 1/p_4, 1/p_5]$, so $s_\alpha=0$. Thus
        \[
            p_4^{-n_4} k_{\alpha} = -p_3^{-n_3} j_{\alpha} - p_5^{-n_5} \ell_{\alpha} \in \Z[1/p_4] \cap \Z[1/p_3, 1/p_5] = \Z.
        \]
        Thus $r_\alpha = p_1^{-n_1} i_{\alpha} + c_\alpha$ for $c_\alpha = p_4^{-n_4} k_{\alpha} \in \Z$. Thus $r_\alpha \in \Z[1/p_1]$, so $g\in \langle p_1^{-n} x_\alpha :  n <\omega, \alpha<\lambda \rangle$, concluding the proof of (1).
        
        \item The proof of (2) is similar to that of (1) after interchanging the roles of $r_\alpha$ and $s_\alpha$ in the proof, and is hence omitted. In particular, $r_\alpha=0$ and $p_4^{-n_4} k_{\alpha} \in \Z[1/p_1] \cap \Z[1/p_4] =\Z$.       
        
        \item Suppose $g\in p_4^\omega G_\mcU$. Then $p_4^\infty | g$, so we have that $r_\alpha-s_\alpha$ is $p_4^\infty$-divisible in $\Z[1/p_1, 1/p_3, 1/p_5]$, so $r_\alpha-s_\alpha=0$. Then        
        %
        $r_\alpha = s_\alpha \in \Z[1/p_1, 1/p_4] \cap \Z[1/p_3, 1/p_4, 1/p_5] = \Z[1/p_4]$. Thus $g = \sum_{\alpha < \lambda} r_\alpha x_{\alpha} +\sum_{\alpha < \lambda} s_\alpha z_{\alpha} = \sum_{\alpha < \lambda} s_\alpha (x_{\alpha}+ z_{\alpha}) \in \langle p_4^{-n} (x_\alpha+z_\alpha) :  n <\omega, \alpha<\lambda \rangle$, completing the proof of (3).
        
        \item The proof of (4) is similar to that of (1), and is hence omitted. In particular, $r_\alpha=0$, $s_\alpha=0$ for every $\alpha \not\in \mcU$, and $p_4^{-n_4} k_{\alpha} \in \Z[1/p_1] \cap \Z[1/p_4] =\Z$.
        \qedhere
        
        
        
        
    \end{enumerate}
\end{proof}

\begin{prop}\label{in-Kh}
Let $\lambda$ be an infinite cardinal and $\mcU \subseteq \lambda$.
\begin{enumerate}
\item $G_\lambda \in \hat{K}_1$ and $\| G_\lambda\|= \lambda$.
\item $G_\mcU \in \hat{K}_2$ and $\| G_\mcU\|= \lambda$.
\item $G_\lambda \leq_p G_\mcU$. 
\end{enumerate}
\end{prop}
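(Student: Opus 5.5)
The plan is to work throughout inside $M_\lambda$, using the coefficient description from the proof of Proposition~\ref{div-G-U}. For $g=\sum r_\alpha x_\alpha+\sum s_\alpha z_\alpha\in G_\mcU$ we have
\[
r_\alpha\in\Z[1/p_1,1/p_4],\qquad s_\alpha\in\Z[1/p_3,1/p_4,1/p_5],\qquad r_\alpha-s_\alpha\in\Z[1/p_1,1/p_3,1/p_5].
\]

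For (1), $G_\lambda=\bigoplus_{\alpha<\lambda}\Z[1/p_1]x_\alpha$. It is torsion-free, has cardinality $\lambda$, and is $p_1$-divisible, so $p_1^\omega G_\lambda=G_\lambda$. For a prime $p\neq p_1$, an element that is $p^\infty$-divisible has each coefficient $p^\infty$-divisible in $\Z[1/p_1]$, which forces it to be $0$. Hence $G_\lambda\in\hat{K}_1$.

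For (3), I would argue exactly as in the Claim of Lemma~\ref{lemma:JEP-NMM}. Suppose $h\in G_\lambda$ and $h=qh_0$ with $h_0\in G_\mcU$ and $q$ prime. Then the $z_\alpha$-coefficients of $h_0$ vanish, since those of $h$ do. Writing $h_0$ via generators as in Proposition~\ref{div-G-U}, this gives $s_\alpha=0$. As in that proof, this yields $p_4^{-n_4}k_\alpha=-p_3^{-n_3}j_\alpha-p_5^{-n_5}\ell_\alpha\in\Z$, so $r_\alpha\in\Z[1/p_1]$. Therefore $h_0\in G_\lambda$, and purity follows. Equivalently, by Proposition~\ref{div-G-U}(1) we have $G_\lambda=p_1^\omega G_\mcU$, and $p^\omega$ of a torsion-free group is always pure, since a divisor of a $p^\infty$-divisible element in a torsion-free group is again $p^\infty$-divisible. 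This second argument is cleaner and is the one I would use.

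For (2), cardinality is clear, since $G_\mcU$ is generated by $\lambda$ elements and $\lambda$ is infinite, and torsion-freeness is inherited from $M_\lambda$.

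\emph{Condition (2)(b).} Given $g\in p_1^\omega G_\mcU=G_\lambda$, Proposition~\ref{div-G-U}(1) lets me write $kg=\sum a_\alpha x_\alpha$ with $a_\alpha\in\Z$ for a suitable $k>0$ clearing $p_1$-denominators. Put $z=\sum a_\alpha z_\alpha\in p_3^\omega G_\mcU$. Then $kg+z=\sum a_\alpha(x_\alpha+z_\alpha)\in p_4^\omega G_\mcU$ by Proposition~\ref{div-G-U}(3).

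\emph{Condition (2)(a).} As in the Claim, it suffices to show $p_3^\omega G_\mcU\cap p_4^\omega G_\mcU=0$. By Proposition~\ref{div-G-U}(2), an element of $p_3^\omega G_\mcU$ has all $x_\alpha$-coefficients equal to $0$. By Proposition~\ref{div-G-U}(3), an element of $p_4^\omega G_\mcU$ has equal $x_\alpha$- and $z_\alpha$-coefficients. Together these force the element to be $0$.

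Finally, $G_\mcU\notin\hat{K}_1$ because $z_0\in p_3^\omega G_\mcU$ is nonzero, and in any case (2)(a) and (2)(b) place it in $\hat{K}_2$. The only step needing care is the purity in (3), and the identification $G_\lambda=p_1^\omega G_\mcU$ handles it.
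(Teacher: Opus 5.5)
Your proposal is correct. Parts (1) and (2) follow the paper's proof essentially verbatim:
\begin{itemize}
\item the same coefficient argument gives $G_\lambda\in\hat{K}_1$;
\item the same claim $p_3^\omega G_\mcU\cap p_4^\omega G_\mcU=0$ is obtained by comparing $x_\alpha$-coefficients via Proposition~\ref{div-G-U}(2),(3);
\item the same witness $z=\sum a_\alpha z_\alpha$ gives Condition (2)(b).
\end{itemize}

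The genuine difference is in (3). The paper proves purity from scratch, without using Proposition~\ref{div-G-U}. It writes $g\in G_\lambda\cap qG_\mcU$ in terms of the generators and multiplies through by $p_1^{m_1}p_3^{m_3}p_4^{m_4}p_5^{m_5}$. It then compares $z_\alpha$-coefficients to get $p_3^{m_3}\mid k_{3,\alpha}$, $p_4^{m_4}\mid k_{4,\alpha}$ and $p_5^{m_5}\mid k_{5,\alpha}$, cancels, and compares $x_\alpha$-coefficients to get $q\mid k_\alpha$.

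You instead reuse Proposition~\ref{div-G-U}(1), namely $G_\lambda=p_1^\omega G_\mcU$, together with the general fact that $p^\omega G$ is pure in any torsion-free $G$. Your first variant amounts to the observation $G_\lambda=G_\mcU\cap\bigoplus_{\alpha}\Q x_\alpha$. This intersection is pure in $G_\mcU$ because $\bigoplus_\alpha\Q x_\alpha$ is a direct summand of $M_\lambda$. Your route is shorter and explains why purity holds, at the price of depending on the earlier computation. The paper's route is self-contained.

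One small point: your one-line justification of the general fact essentially restates it. To make it complete, write $n=p^mn'$ with $p\nmid n'$. Torsion-freeness disposes of the factor $p^m$. For $n'$, if $n'g=p^kh'$, choose $a,b$ with $an'+bp^k=1$; then $g=p^k(ah'+bg)$.
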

\begin{proof}  \
\begin{enumerate}
    \item Let $g = \sum_{\alpha<\lambda} r_\alpha x_\alpha \in G_\lambda$ for $r_\alpha \in \Z[1/p_1]$ and $n<\omega$. Then $g' := \sum_{\alpha<\lambda} p_1^{-n}r_\alpha x_\alpha \in G_\lambda$ and $g = p_1^n g'$, so $G_\lambda = p_1^\omega G_\lambda$.

    Let $p \neq p_1$ be a prime, and let $g = \sum_{\alpha<\lambda} r_\alpha x_\alpha\in p^\omega G_\lambda$. Then $p^\infty \mid g$, so every $r_\alpha$ is $p$-divisible in $\Z[1/p_1]$, so $r_\alpha=0$ and $g=0$.    
    %
    Thus $G_\lambda \in \hat{K}_1$.
    
    Note that $|G_\lambda|$ is the set of all finite linear combinations of $\{p_1^{-n} x_\alpha : n< \omega, \alpha < \lambda\}$, so $\|G_\lambda\|=\lambda$.
    
    \item 
    We first show  a claim.\medskip
    
    \underline{Claim}: 
    $p_3^\omega G_\mcU \cap p_4^\omega G_\mcU=0$.
    

    \underline{Proof of Claim}: Let $g \in p_3^\omega G_\mcU \cap p_4^\omega G_\mcU$. Then $g = \sum_{\alpha<\lambda} s_\alpha z_\alpha = \sum_{\alpha<\lambda} t_\alpha (x_\alpha+z_\alpha)$ for suitable $s_\alpha, t_\alpha \in \Q$ by Proposition \ref{div-G-U}. Comparing coefficients of $x_\alpha$, we find $t_\alpha=0$ for all $\alpha <\lambda$, so $g = \sum 0(x_\alpha+z_\alpha) = 0$, and $p_3^\omega G_\mcU \cap p_4^\omega G_\mcU = 0$. $\dagger_{\text{Claim}}$\medskip

    We show $G_\mcU \in \hat{K}_2$. Let $g\in p_1^\omega G_\mcU$. If we have $z,z' \in p_3^\omega G_\mcU$ such that $g+z, g+z' \in p_4^\omega G_\mcU$, then $z-z' \in p_3^\omega G_\mcU \cap p_4^\omega G_\mcU = 0$, so $z=z'$, satisfying Condition (2)(a).

    Consider $g\in p_1^\omega G_\mcU$. Then $g = \sum_{\alpha<\lambda} p_1^{-n} k_\alpha x_\alpha$ for some $n<\omega$, $k_\alpha \in \Z$ all but finitely many zero. Let $z = \sum_{\alpha<\lambda} k_\alpha z_\alpha \in p_3^\omega G_\mcU$, and note
    \[
    p_1^n g + z = \sum_{\alpha<\lambda} k_\alpha(x_\alpha+z_\alpha) \in p_4^\omega G_\mcU.
    \]
    Thus $G_\mcU$ satisfies Condition (2)(b), so $G_\mcU \in \hat{K}_2$.

    To see $\|G_\mcU\|=\lambda$, note $|G_\mcU|$ is the set of all finite linear combinations of $\{p_1^{-n} x_\alpha, p_3^{-n} z_\alpha, p_4^{-n}(x_\alpha+z_\alpha), p_5^{-n} z_\beta : n< \omega, \alpha < \lambda, \beta \in \mcU\}$, so $\|G_\mcU\|=\lambda$.
    
    \item To prove $G_\lambda \leq_p G_\mcU$, we show $G_\lambda \cap q G_\mcU = q G_\lambda$ for any prime $q$. Let $g \in G_\lambda \cap q G_\mcU$. Then there exist $k_\alpha, k_{1,\alpha}, k_{3,\alpha}, k_{4,\alpha}, k_{5,\alpha} \in \Z$ almost all zero, and $m_1, m_3, m_4, m_5 \in \Z_{\geq 0}$ such that
    \begin{align*}
        g &= \sum_{\alpha < \lambda} p_1^{-m_1} k_\alpha x_\alpha \\
        &= q \sum_{\alpha < \lambda} \Big(p_1^{-m_1}  k_{1,\alpha} x_\alpha + p_3^{-m_3} k_{3,\alpha} z_\alpha + p_4^{-m_4} k_{4,\alpha} (x_\alpha+z_\alpha) + p_5^{-m_5} k_{5,\alpha} z_\alpha \Big).
    \end{align*}
    Multiplying everything by $p_1^{m_1} p_3^{m_3} p_4^{m_4} p_5^{m_5}$ yields
    \begin{align*}
        p_1^{m_1} p_3^{m_3} p_4^{m_4} p_5^{m_5} g =\sum_{\alpha < \lambda} & p_3^{m_3} p_4^{m_4} p_5^{m_5} k_\alpha x_\alpha \\
        = \sum_{\alpha < \lambda} & \Big( q p_3^{m_3} p_4^{m_4} p_5^{m_5} k_{1,\alpha} x_\alpha
        + q p_1^{m_1} p_4^{m_4} p_5^{m_5} k_{3,\alpha} z_\alpha \\
        + & q p_1^{m_1} p_3^{m_3} p_5^{m_5} k_{4,\alpha} (x_\alpha+z_\alpha) + q p_1^{m_1} p_3^{m_3} p_4^{m_4} k_{5,\alpha} z_\alpha \Big).
    \end{align*}

    Comparing the coefficients of the $z_\alpha$ and canceling out a $q p_1^{m_1}$, we see that $0 = p_4^{m_4} p_5^{m_5} k_{3,\alpha} + p_3^{m_3} p_5^{m_5} k_{4,\alpha} + p_3^{m_3} p_4^{m_4} k_{5,\alpha}$. Thus for every $\alpha < \lambda$, $p_3^{m_3} | k_{3,\alpha}$, $p_4^{m_4} | k_{4,\alpha}$, and $p_5^{m_5} | k_{5,\alpha}$ in $\Z$. So there exist $r_{3,\alpha}, r_{4,\alpha}, r_{5,\alpha} \in \Z$ such that $k_{3,\alpha} = p_3^{m_3} r_{3,\alpha}$, $k_{4,\alpha} = p_4^{m_4} r_{4,\alpha}$, and $k_{5,\alpha} = p_5^{m_5} r_{5,\alpha}$. Substituting these into our expression for $p_1^{m_1} p_3^{m_3} p_4^{m_4} p_5^{m_5} g$ and canceling $p_3^{m_3} p_4^{m_4} p_5^{m_5}$ from every term yields
    \begin{align*}
        p_1^{m_1}g &=\sum_{\alpha < \lambda} k_\alpha x_\alpha \\
        &= \sum_{\alpha < \lambda} \Big( q k_{1,\alpha} x_\alpha + q p_1^{m_1} r_{3,\alpha} z_\alpha + q p_1^{m_1} r_{4,\alpha} (x_\alpha+z_\alpha) + q p_1^{m_1} r_{5,\alpha} z_\alpha \Big).
    \end{align*}
      Comparing the coefficients of the $x_\alpha$, we get that $q| k_\alpha$ in $\Z$ for all $\alpha < \lambda$, so there exist $r_\alpha \in \Z$ such that $k_\alpha = q r_\alpha$, and $g = \sum_{\alpha < \lambda} p_1^{-m_1}k_\alpha x_\alpha = q\sum_{\alpha < \lambda} p_1^{-m_1}r_\alpha x_\alpha \in q G_\lambda$. \qedhere
\end{enumerate}
\end{proof}

We obtain the first statement of Theorem \ref{maint}. 

\begin{theorem}\label{Prop:K_not_stable}
For every infinite cardinal $\lambda$, $\Kh$ is not stable in $\lambda$. 
\end{theorem}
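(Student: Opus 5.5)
Fix an infinite cardinal $\lambda$. The base model will be $G_\lambda$, which lies in $\hat{K}_1$, has size $\lambda$ (Proposition \ref{in-Kh}(1)), and satisfies $G_\lambda \leq_p G_\mcU$ for every $\mcU\subseteq\lambda$ (Proposition \ref{in-Kh}(3)). We will show that $|\gS_{\Kh}(G_\lambda)| \geq 2^\lambda > \lambda$. For each $\mcU \subseteq \lambda$ we look at the type $p_\mcU := \gtp_{\Kh}(z/G_\lambda; G_\mcU)$ for a suitable element $z$. Because $\|G_\mcU\|=\lambda$ and $G_\mcU\in\hat{K}_2$, every such type lies in $\gS_{\Kh}(G_\lambda)$. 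It therefore suffices to show that $\mcU \neq \mcV$ implies $p_\mcU \neq p_\mcV$.

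The element cannot be a single $z_\alpha$, since such a type only records finitely many coordinates. Instead we realize the type of a fresh element in the \emph{closure}. Since $\Kh$ admits intersections, Galois types over $G_\lambda$ are governed by the closures $\Cl^{G_\mcU}_{\Kh}(G_\lambda\cup\{b\})$. The key observation is that we do not even need an extra element: take $b=0$ (or any element of $G_\lambda$), and note that $G_\lambda\in\hat{K}_1$ while $G_\lambda\cup\{b\}$ lies in no $\hat{K}_1$ pure subgroup of $G_\mcU$ containing it. Indeed, if $H\leq_p G_\mcU$ contains $G_\lambda$ and $H\in\hat{K}$, then $H\in\hat{K}_2$ by Proposition \ref{basic}(3) once $H\neq 0$ and $H\notin\hat{K}_1$. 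This requires care, because $G_\lambda$ itself is in $\hat{K}_1$. To avoid the issue, I would pick $b=z_0$, or $b=x_0+z_0$. Then $p_3^\omega$ or $p_4^\omega$ of the closure is nonzero, so the closure is in $\hat{K}_2$ and is computed by Proposition \ref{closure}(2). Since every $x_\alpha\in G_\lambda\cap p_1^\omega G_\mcU$, the $A_{3n+3}$ step forces the unique $z_{x_\alpha}$ into the closure. By Condition (2)(a) and the Claim in Proposition \ref{in-Kh}(2), this element is $z_\alpha$ (with $k=1$). Hence $C_\mcU:=\Cl^{G_\mcU}_{\Kh}(G_\lambda\cup\{b\})$ contains all $z_\alpha$, so by purity it equals $G_\mcU$. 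That gives $\gtp(b/G_\lambda;G_\mcU)=\gtp(b/G_\lambda;G_\mcV)$ if and only if there is an isomorphism $f:G_\mcU\cong G_\mcV$ fixing $G_\lambda$ pointwise with $f(b)=b$.

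It remains to show that any isomorphism $f:G_\mcU\cong G_\mcV$ fixing $G_\lambda$ forces $\mcU=\mcV$. Such an $f$ preserves $p_i^\omega$ for all $i$. For each $\alpha$, $f(z_\alpha)\in p_3^\omega G_\mcV$ and $x_\alpha+f(z_\alpha)\in p_4^\omega G_\mcV$. By uniqueness in Condition (2)(a) for $G_\mcV$, this gives $f(z_\alpha)=z_\alpha$. Now if $\beta\in\mcU$, then $z_\beta\in p_5^\omega G_\mcU$, so $z_\beta=f(z_\beta)\in p_5^\omega G_\mcV$. By Proposition \ref{div-G-U}(4), that forces $\beta\in\mcV$. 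By symmetry $\mcU=\mcV$, which produces $2^\lambda$ distinct types over a model of size $\lambda$.

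The main obstacle is the closure computation: verifying that the closure really is in $\hat{K}_2$ and that the chosen $z_{x_\alpha}$ is exactly $z_\alpha$, which depends on the unique-witness property. A cleaner alternative, which I would try if this bookkeeping gets messy, avoids closures entirely. Argue directly from the definition of $E$: any isomorphism of closures fixing $G_\lambda$ must send the witness $z_\alpha$ to the witness for $x_\alpha$ in the other group. Whichever route is used, the underlying point is that the unique-witness property makes the isomorphism rigid on the $z_\alpha$'s, while $p_5$-divisibility encodes the set $\mcU$.
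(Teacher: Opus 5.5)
Your argument is correct and is essentially the paper's proof. You take $b=z_0$ and note that $\Cl^{G_\mcU}_{\Kh}(G_\lambda\cup\{z_0\})\in\hat{K}_2$ contains every $z_\alpha$. (You also observe that this closure is all of $G_\mcU$ by purity, which is harmless but not needed.) You then use Condition (2)(a) in $G_\mcV$ to get $f(z_\alpha)=z_\alpha$, and the $p_5$-divisibility of $z_\beta$ to contradict Proposition \ref{div-G-U}(4).

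Discard your preliminary asides, which are wrong. A single $z_\alpha$ is exactly the element that works; it is your own eventual choice $z_0$. And $b=0$, or any $b\in G_\lambda$, fails outright: the closure is then $G_\lambda$ itself, so all these types coincide.
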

\begin{proof}
We first show a  claim.\medskip

\underline{Claim}: For every $\alpha < \lambda$, $z_\alpha \in \Cl_{\Kh}^{G_\mcU}(\{ z_0\} \cup G_\lambda)$.
        
        \underline{Proof of Claim}: Let $\alpha < \lambda$. Observe that  
      $\Cl_{\Kh}^{G_\mcU}(\{ z_0\} \cup G_\lambda)   \in \hat{K}_2$ as $z_0 \in p_3^\omega \Cl_{\Kh}^{G_\mcU}(\{ z_0\} \cup G_\lambda)$. So there is an $m \in \Z_{> 0}$  and a $z' \in p_3^\omega \Cl_{\Kh}^{G_\mcU}(\{ z_0\} \cup G_\lambda)$ such that $mx_\alpha +z' \in p_4^\omega \Cl_{\Kh}^{G_\mcU}(\{ z_0\} \cup G_\lambda)$. Then $z' \in p_3^\omega G_\mcU$ and $mx_\alpha + z' \in p_4^\omega G_\mcU$. As $z_\alpha \in p_3^\omega G_\mcU$ and $mx_\alpha + mz_\alpha \in p_4^\omega G_\mcU$ by definition of $G_\mcU$, it follows from Condition (2)(b) that $mz_\alpha =z' \in \Cl_{\Kh}^{G_\mcU}(\{ z_0\}\cup G_\lambda)$. Hence $z_\alpha \in \Cl_{\Kh}^{G_\mcU}(\{ z_0\} \cup G_\lambda)\leq_p G_\mcU$ because $G_\mcU$ is torsion-free. $\dagger_{\text{Claim}}$\medskip

It is enough to show that  if  $\mcU, \mcV \subseteq \lambda$ with $\mcU \neq \mcV$, then $\gtp_{\Kh}(z_0/G_\lambda; G_\mcU) \neq \gtp_{\Kh}(z_0/G_\lambda; G_\mcV)$ as this guarantees $|\gS_{\Kh}(G_\lambda)|\ge 2^\lambda > \lambda = \|G_\lambda\|$.
Thus, suppose by way of contradiction that there are $\mcU \neq \mcV$ such that $\gtp_{\Kh}(z_0/G_\lambda; G_\mcU) = \gtp_{\Kh}(z_0/G_\lambda; G_\mcV)$. Without loss of generality, let $\beta \in \mcU \setminus \mcV$.

Since $\Kh$ admits intersections, there exists an isomorphism $f:\Cl_{\Kh}^{G_\mcU}(\{ z_0\} \cup G_\lambda) \cong \Cl_{\Kh}^{G_\mcV}
(\{ z_0\} \cup G_\lambda)$ such that $f(z_0)=z_0$ and $f \restr {G_\lambda} = \id_{G_\lambda}$. 

We first show that $f(z_\alpha)= z_\alpha$ for every $\alpha < \lambda$.  First observe that $f(z_\alpha)$ is well-defined as $z_\alpha \in  \Cl_{\Kh}^{G_\mcU}(\{ z_0\} \cup G_\lambda)$ by the claim. Moreover, $f(z_\alpha) \in p_3^\omega G_\mcV$ and $f(x_\alpha + z_\alpha) =x_\alpha + f(z_\alpha) \in p_4^\omega G_\mcV$ as $z_\alpha \in p_3^\omega G_\mcU$ and $x_\alpha + z_\alpha \in p_4^\omega G_\mcU$ by the definition of $G_\mcU$. Similarly, $z_\alpha \in p_3^\omega G_\mcV$ and $x_\alpha + z_\alpha \in p_4^\omega G_\mcV$ by the definition of $G_\mcV$. Hence $f(z_\alpha) = z_\alpha$ since $G_\mcV \in \hat{K}_2$.

As $\beta \in \mcU$, $z_\beta \in p_5^\omega G_\mcU$ by definition of $G_\mcU$. Hence $f(z_\beta) =  z_\beta \in p_5^\omega G_\mcV$ where $f(z_\beta)=z_\beta$ by the previous paragraph. This is a contradiction to $\beta \not\in \mcV$ and  Proposition \ref{div-G-U}(4).\end{proof}

\begin{remark}
The choice of $z_0$ in the proof of the last theorem is a rather superficial one. More generally, if $\mcU, \mcV \subseteq \lambda$ with $\mcU \neq \mcV$, $a \in G_\mcU \setminus G_\lambda$, and $b \in G_\mcV \setminus G_\lambda$, then $\gtp_{\Kh}(a/G_\lambda; G_\mcU) \neq \gtp_{\Kh}(b/G_\lambda; G_\mcV)$ holds by a similar argument.
\end{remark}

\begin{remark}
Observe that $\Kh$ is also an example of an AEC of modules such that Galois types are not pp-syntactic in the sense of \cite[Definition 3.6]{maz2}, but $\Kh$ is $(<\aleph_0)$-tame. Galois types are not pp-syntactic because $\Kh$ is not stable and \cite[Theorem 3.8]{maz2}. 
\end{remark}

\begin{remark}
  The Galois types that we use to show that $\Kh$ is not stable are \emph{essentially} the same as those of \cite[Section 4]{ps}. The only differences are that \cite{ps} has an additional generator $y$ and prime number $p_2$ attached to it and that $p_5$ has also some relation to $x_\alpha$, but these are unnecessary in both settings.    
\end{remark}

We obtain the third statement of Theorem \ref{maint}.

\begin{lemma}\label{Prop:K_no_AP}
    $\Kh$ does not have the amalgamation property.
\end{lemma}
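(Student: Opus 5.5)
We witness the failure of amalgamation with the groups from Definition \ref{groups}. Take $\lambda=\aleph_0$, $\mcU=\{0\}$ and $\mcV=\emptyset$. By Proposition \ref{in-Kh}, $G_\lambda\in\hat{K}_1$, $G_\mcU,G_\mcV\in\hat{K}_2$, and $G_\lambda\leq_p G_\mcU$, $G_\lambda\leq_p G_\mcV$. So $G_\mcU\leftarrow G_\lambda\rightarrow G_\mcV$, with the inclusions as maps, is a span of $\Kh$-embeddings. We show it has no amalgam.

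Suppose for contradiction that $N\in\hat{K}$ and $f:G_\mcU\to N$, $g:G_\mcV\to N$ are pure embeddings with $f\restr G_\lambda=g\restr G_\lambda$. Since $f[G_\mcU]\leq_p N$ and $f[G_\mcU]\in\hat{K}_2$ is nonzero, Proposition \ref{basic}(3) gives $N\in\hat{K}_2$.

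The key step is to force $f(z_0)=g(z_0)$ using Condition (2)(a) in $N$. Pure embeddings preserve $p^\omega$-divisibility. Hence $f(z_0)\in p_3^\omega N$ and $f(x_0)+f(z_0)=f(x_0+z_0)\in p_4^\omega N$. Likewise $g(z_0)\in p_3^\omega N$ and $g(x_0)+g(z_0)\in p_4^\omega N$. Moreover $f(x_0)=g(x_0)$, and this element lies in $p_1^\omega N$ because $x_0\in G_\lambda=p_1^\omega G_\lambda$. Uniqueness in Condition (2)(a) then yields $f(z_0)=g(z_0)$.

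To reach the contradiction, note that $z_0\in p_5^\omega G_\mcU$ by Proposition \ref{div-G-U}(4), since $0\in\mcU$. Therefore $g(z_0)=f(z_0)\in p_5^\omega N$. Now $g[G_\mcV]\leq_p N$, and by the Fact $p_5^\omega g[G_\mcV]=p_5^\omega N\cap g[G_\mcV]$. So $g(z_0)\in p_5^\omega g[G_\mcV]$, and since $g$ is an isomorphism onto its image, $z_0\in p_5^\omega G_\mcV$. But $\mcV=\emptyset$, so Proposition \ref{div-G-U}(4) gives $p_5^\omega G_\mcV=0$. This forces $z_0=0$, which is absurd.

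The main point to handle carefully is identifying $f(z_0)$ with $g(z_0)$. This needs both that the amalgam lies in $\hat{K}_2$, via Proposition \ref{basic}(3), and that divisibility is transferred upward along the pure embeddings. The rest only uses that purity reflects $p_5^\omega$-divisibility back into $G_\mcV$.
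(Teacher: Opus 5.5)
Your proof is correct and follows essentially the paper's approach. The paper only remarks that the instability argument of Theorem \ref{Prop:K_not_stable} adapts to the span $G_\mcU \leftarrow G_\lambda \to G_\mcV$, and you carry this out concretely: Proposition \ref{basic}(3) places the amalgam in $\hat{K}_2$, Condition (2)(a) there forces $f(z_0)=g(z_0)$, and purity reflects $p_5$-divisibility back into $G_\mcV$ (working in $N$ even makes the closure Claim unnecessary).
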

\begin{proof}
For $\mcU, \mcV \subseteq \lambda$ with $\mcU \neq \mcV$, a similar argument to that of Theorem \ref{Prop:K_not_stable} can be used to show that $G_\lambda \leq_p G_\mcU, G_\mcV$ cannot be completed to a commutative square of pure embeddings. 
\end{proof}

\begin{remark}
More generally, using a similar argument as in the last proof, the family $\{G_\lambda, G_\mcU : \mcU \subseteq \lambda\}\subseteq \hat{K}$ satisfies the following properties:
\begin{itemize}
\item $\|G_\lambda\|=\lambda$.
\item $G_\lambda \leq_p G_\mcU$ for all $\mcU \subseteq \lambda$.
\item For any $\mcU, \mcV \subseteq \lambda$ with $\mcU \neq \mcV$, there does not exist any finite sequence of groups $H_i\in \hat{K}$ for $0\le i \le k$ with $H_0=G_\mcU$ and $H_k = G_\mcV$ such that $G_\lambda \leq_p H_i, H_{i+1}$ can be completed to a commutative square of pure embeddings for all $i<k$.
\end{itemize}
In the context of the standard definition of Galois types, the existence of such a family $\{G_\lambda, G_\mcU : \mcU \subseteq \lambda\}$ is sufficient for proving that $\Kh$ fails to be stable in $\lambda$.
\end{remark}

We finish by providing the proof of the main theorem of our paper.

 \begin{proof}[Proof of Theorem \ref{maint}]
$\Kh$ is an AEC with  $\LS(\Kh)=\aleph_0$ by Lemma \ref{kh-aec}.

\begin{enumerate}
    \item Follows from Theorem \ref{Prop:K_not_stable}.
    \item Follows from Lemma \ref{lemma:JEP-NMM}.
    \item Follows from Lemma \ref{Prop:K_no_AP}.
    \item Follows from Theorem \ref{Thm:tame}. \qedhere
\end{enumerate}
     \end{proof}

	\end{document}